%%%%%%%%%%%%%%%%%%%%%%% file template.tex %%%%%%%%%%%%%%%%%%%%%%%%%
%
% This is a general template file for the LaTeX package SVJour3
% for Springer journals.          Springer Heidelberg 2010/09/16
%
% Copy it to a new file with a new name and use it as the basis
% for your article. Delete % signs as needed.
%
% This template includes a few options for different layouts and
% content for various journals. Please consult a previous issue of
% your journal as needed.
%
%%%%%%%%%%%%%%%%%%%%%%%%%%%%%%%%%%%%%%%%%%%%%%%%%%%%%%%%%%%%%%%%%%%
%
% First comes an example EPS file -- just ignore it and
% proceed on the \documentclass line
% your LaTeX will extract the file if required
% [arxiv_v2: filecontents example.eps stripped, 188 chars]
\RequirePackage{fix-cm}
\documentclass[smallextended]{svjour3}       % onecolumn (second format)
\smartqed  % flush right qed marks, e.g. at end of proof
\usepackage{graphicx,amssymb,amsmath,url}
\usepackage{hyperref}
\usepackage{color}
\usepackage{mathptmx}
\usepackage{latexsym}
%
% \usepackage{mathptmx}      % use Times fonts if available on your TeX system
%
% insert here the call for the packages your document requires
%\usepackage{latexsym}
% etc.
%
% please place your own definitions here and don't use \def but
% \newcommand{}{}
%
% Insert the name of "your journal" with
% \journalname{myjournal}
%
% THEOREMS -------------------------------------------------------
\spnewtheorem{thm}{Theorem}[section]{\bfseries}{\upshape}
\numberwithin{thm}{section}
\spnewtheorem{cor}[thm]{Corollary}{\bfseries}{\upshape}
\spnewtheorem{lem}[thm]{Lemma}{\bfseries}{\upshape}
\spnewtheorem{prop}[thm]{Proposition}{\bfseries}{\upshape}
\spnewtheorem{defn}[thm]{Definition}{\bfseries}{\upshape}
\spnewtheorem{rem}[thm]{Remark}{\bfseries}{\upshape}
\spnewtheorem{exam}[thm]{Example}{\bfseries}{\upshape}
\numberwithin{equation}{section}
% MATH -----------------------------------------------------------
\newcommand{\seq}[1]{\langle #1\rangle}
% ----------------------------------------------------------------
\DeclareMathOperator{\Cay}{Cay}
\DeclareMathOperator{\Aut}{Aut}
\def\G{\Gamma}
\begin{document}

\title{Distance-regular Cayley graphs with least eigenvalue $-2$}

\dedication{In honor of Andries Brouwer for his 65th birthday}

%\subtitle{Do you have a subtitle?\\ If so, write it here}

%\titlerunning{Short form of title}        % if too long for running head

\author{Alireza Abdollahi \and Edwin R. van Dam   \and  Mojtaba Jazaeri}

%\authorrunning{Short form of author list} % if too long for running head

\institute{A. Abdollahi \at
              Department of Mathematics, University of Isfahan, Isfahan 81746-73441, Iran; School of Mathematics, Institute for Research in Fundamental Sciences (IPM), P.O. Box: 19395-5746, Tehran, Iran \\
              %Tel.: +123-45-678910\\
              %Fax: +123-45-678910\\
              \email{a.abdollahi@math.ui.ac.ir}           %  \\
%             \emph{Present address:} of F. Author  %  if needed
           \and
           E. R. van Dam \at
              Department of Econometrics and O.R., Tilburg University, P.O. Box 90153, 5000 LE Tilburg, The Netherlands \\
              \email{Edwin.vanDam@uvt.nl}
              \and
              M. Jazaeri \at
              Department of Mathematics, Faculty of Mathematics and Computer Sciences, Shahid Chamran University of Ahvaz, Ahvaz, Iran; School of Mathematics, Institute for Research in Fundamental Sciences (IPM), P.O. Box: 19395-5746, Tehran, Iran \\
              \email{M.Jazaeri@scu.ac.ir, SeJa81@gmail.com}
}

\date{Received: date / Accepted: date}
% The correct dates will be entered by the editor

\maketitle

\begin{abstract}
We classify the distance-regular Cayley graphs with least eigenvalue $-2$ and diameter at most three. Besides sporadic examples, these comprise of the lattice graphs, certain triangular graphs, and line graphs of incidence graphs of certain projective planes. In addition, we classify the possible connection sets for the lattice graphs and obtain some results on the structure of distance-regular Cayley line graphs of incidence graphs of generalized polygons.
\keywords{Cayley graph \and Strongly regular graph \and Distance-regular graph \and Line graph \and Generalized polygon \and Eigenvalues}
% \PACS{PACS code1 \and PACS code2 \and more}
\subclass{05E30 \and 05C25 \and 20D60 \and 51E12}
\end{abstract}

\section{Introduction}
Distance-regular graphs form an important class of graphs in the area of algebraic graph theory. Originally, they were defined as a generalization of distance-transitive graphs, and many of them are not even vertex-transitive. For background on distance-regular graphs, we refer to the monograph by Brouwer, Cohen, and Neumaier \cite{BCN} and the recent survey by Van Dam, Koolen, and Tanaka \cite{DKT}. Here we study the question which distance-regular graphs are Cayley graphs. This question has been well-studied for distance-regular graphs with diameter two, that is, for strongly regular graphs, see the survey paper on partial difference sets by Ma \cite{Ma}. Miklavi\v{c} and Poto\v{c}nik \cite{MP1,MP2} classified the distance-regular circulant graphs and distance-regular Cayley graphs on dihedral groups, whereas Miklavi\v{c} and \v{S}parl \cite{MS} studied a particular class of distance-regular Cayley graphs on abelian groups. See also the monograph by Konstantinova \cite{EK} for some basic facts and problems on Cayley graphs and distance-regular graphs.

It is well-known that graphs with least eigenvalue $-2$ have been classified by using root lattices, see \cite[\S 3.12]{BCN}. In particular, it follows that a distance-regular graph with least eigenvalue $-2$ is strongly regular or the line graph of a regular graph with girth at least five. The strongly regular graphs with least eigenvalue $-2$ have been classified by Seidel \cite{Se}. We will give an overview of which of these graphs is a Cayley graph and in particular, we will classify the possible connection sets for the lattice graphs, using some general results that we obtain for the distance-regular line graphs of incidence graphs of generalized polygons. We will also classify the Cayley graphs with diameter three among the distance-regular line graphs, in particular the line graphs of Moore graphs and the line graphs of incidence graphs of projective planes. What remains open is to classify which line graphs of incidence graphs of generalized quadrangles and hexagons are Cayley graphs.
\section{Preliminaries}
Let $G$ be a  finite group with identity element $e$ and $S\subseteq G\setminus\{e\}$ be a set such that $S=S^{-1}$ (we call $S$ inverse-closed). An (undirected) Cayley graph $\Cay(G,S)$ with connection set $S$ is the graph whose vertex set is $G$ and where two vertices $a$ and $b$ are adjacent (denoted by $a \sim b$) whenever $ab^{-1}\in S$. The Cayley graph $\Cay(G,S)$ is connected if and only if the subgroup $\langle S \rangle$ generated by $S$ is equal to $G$. In the literature, it is sometimes assumed explicitly that a Cayley graph is connected. In this case, the connection set is also called a generating set. Here we follow the terminology used by Alspach \cite{BW}. We denote the order of an element $a \in G$ by $O(a)$, the subgroup generated by $a$ by $\seq{a}$ and the cyclic group of order $n$ by $\mathbb{Z}_{n}$. Furthermore, the cycle graph of order $m$ is denoted by $C_{m}$ and the line graph of a graph $\Gamma$ by $L(\Gamma)$.
\subsection{Distance-regular graphs}
A strongly regular graph with parameters $(v,k,\lambda,\mu)$ is a $k$-regular graph with $v$ vertices such that every pair of adjacent vertices has $\lambda$ common neighbors and every pair of non-adjacent vertices has $\mu$ common neighbors. Here we exclude disjoint unions of complete graphs and edgeless graphs, and therefore strongly regular graphs are connected with diameter two.

A connected graph with diameter $d$ is distance-regular whenever for all vertices $x$ and $y$, and all integers $i,j\leq d$, the number of vertices at distance $i$ from $x$ and distance $j$ from $y$ depends only on $i$, $j$, and the distance between $x$ and $y$.
A distance-regular graph with diameter two is the same as a strongly regular graph.

A generalized $d$-gon is a point-line incidence structure whose (bipartite) incidence graph has diameter $d$ and girth $2d$. It is of order $(s,t)$ if every line contains $s+1$ points, and every point is on $t+1$ lines. For $s=t$, both the incidence graph and its line graph are distance-regular. This line graph can also be viewed as the point graph of a generalized $2d$-gon of order $(s,1)$. For some basic background on generalized polygons, we refer to the monographs by Godsil and Royle \cite[\S 5.6]{Go} and Brouwer, Cohen, and Neumaier \cite[\S 6.5]{BCN}.

The (adjacency) spectrum of a graph is the multiset of eigenvalues of its adjacency matrix. As mentioned in the introduction, distance-regular graphs with least eigenvalue $-2$ can be classified. In particular, we have the following.

\begin{thm}\label{thm:drg-2} \cite[Thm.~3.12.4 and 4.2.16]{BCN} Let $\G$ be a distance-regular graph with least eigenvalue $-2$. Then $\G$ is a cycle of even length, or its diameter $d$ equals $2,3,4,$ or $6$. Moreover,
\begin{itemize}
\item If $d=2$, then $\G$ is a cocktail party graph, a triangular graph, a lattice graph, the Petersen graph, the Clebsch graph, the Shrikhande graph, the Schl\"{a}fli graph, or one of the three Chang graphs,
   \item If $d=3$, then $\G$ is the line graph of the Petersen graph, the line graph of the Hoffman-Singleton graph, the line graph of a strongly regular graph with parameters $(3250,57,0,1)$, or the line graph of the incidence graph of a projective plane,
   \item If $d=4$, then $\G$ is the line graph of the incidence graph of a generalized quadrangle of order $(q,q)$,
   \item If $d=6$, then $\G$ is the line graph of the incidence graph of a generalized hexagon of order $(q,q)$.
   \end{itemize}
\end{thm}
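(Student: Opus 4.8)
The plan is to organize the argument by diameter, taking as the single structural input the root-lattice dichotomy recalled in the introduction: by the classification of graphs with least eigenvalue $-2$ via root lattices \cite[\S3.12]{BCN}, a distance-regular graph $\G$ with least eigenvalue $-2$ is either strongly regular (diameter two) or the line graph $L(H)$ of a connected regular graph $H$ of girth at least five. The diameter-two case is then immediate: a strongly regular graph with least eigenvalue $-2$ is, by Seidel's theorem \cite{Se}, one of the cocktail party graphs, the triangular graphs $T(n)=L(K_n)$, the lattice graphs $L(K_{n,n})$, the Petersen, Clebsch, Shrikhande or Schl\"{a}fli graph, or one of the three Chang graphs. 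This is precisely the asserted $d=2$ list, so from here on I would assume $d=\mathrm{diam}(\G)\ge 3$ and write $\G=L(H)$ with $H$ connected, regular of valency $k\ge 3$, and of girth at least five.

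The first task is to recover $H$ and show it is distance-regular. Since $H$ is triangle-free, the only triangles of $\G=L(H)$ arise from the stars at vertices of $H$; hence the maximal cliques of $\G$ form a Krausz partition from which $H$ is reconstructed as the associated intersection graph, each vertex of $\G$ lying in exactly two cliques. Under this correspondence the graph distance in $\G$ between two edges is governed by the distances in $H$ between their endpoints, and I would translate the intersection numbers of $\G$ into counts in $H$. Carrying this out shows that $H$ is itself distance-regular, with its valency, diameter and girth determined by the intersection array of $\G$; in particular the girth condition forces $a_1=0$ and $c_2=1$ in $H$.

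The crux, and the step I expect to be the main obstacle, is that distance-regularity of $\G=L(H)$ is strictly stronger than distance-regularity of $H$: it demands that the number of edges at each $\G$-distance from a fixed edge be constant, which in turn forces the metric structure of $H$ to be extremal. I would analyse the translated intersection numbers to show that $H$ must have girth equal to twice its diameter (the even, bipartite case) or be a Moore graph (the odd case); distance-regular graphs of girth five that are neither Moore graphs nor such incidence structures, for instance the dodecahedron, are ruled out precisely here because their line graphs fail to be distance-regular. This identifies $H$, up to the degenerate cycle case, as either a Moore graph of diameter two --- whence $\G=L(H)$ has diameter three --- or the incidence graph of a generalized $d'$-gon of order $(q,q)$, whose line graph has diameter $d'$ since, as noted in the Preliminaries, it is the point graph of a generalized $2d'$-gon of order $(q,1)$. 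A Moore graph of diameter two has valency $2,3,7$ or $57$; the valency-$2$ pentagon is discarded because $L(C_5)=C_5$ does not have $-2$ as an eigenvalue, leaving the Petersen graph, the Hoffman--Singleton graph, and the putative strongly regular $(3250,57,0,1)$ graph. The thin case $q=1$ of the incidence graphs is an even cycle $C_{2d'}=L(C_{2d'})$, giving the cycle-of-even-length family of unbounded diameter.

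It remains to bound the generalized-polygon parameter. For thick generalized $d'$-gons ($q\ge 2$), the Feit--Higman theorem restricts $d'$ to $\{2,3,4,6,8\}$; requiring order $(q,q)$ eliminates $d'=8$ (generalized octagons force $s\ne t$), and $d'=2$ returns the complete bipartite graph, already accounted for among the lattice graphs at diameter two. Hence $d'\in\{3,4,6\}$, yielding the line graphs of incidence graphs of projective planes at $d=3$, of generalized quadrangles of order $(q,q)$ at $d=4$, and of generalized hexagons of order $(q,q)$ at $d=6$. Collecting the diameter-two list from Seidel's theorem, the Moore-graph and projective-plane cases at $d=3$, the generalized quadrangle and hexagon cases at $d=4$ and $d=6$, together with the even cycles, reproduces exactly the stated classification. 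Besides the line-graph rigidity of the third paragraph, the other delicate point is the bookkeeping relating the diameter $d$ of $\G$, the girth and diameter of $H$, and the polygon parameter $d'$, which must be tracked consistently throughout.
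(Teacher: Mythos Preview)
The paper does not prove this theorem: it is quoted verbatim as \cite[Thm.~3.12.4 and 4.2.16]{BCN} and used as a black box, so there is no in-paper argument to compare your proposal against. Your outline is essentially the standard route taken in \cite{BCN}: the root-lattice dichotomy of Cameron--Goethals--Seidel--Shult reduces to Seidel's list in diameter two and to line graphs $L(H)$ otherwise; one then shows (this is the content of \cite[Thm.~4.2.16]{BCN}) that distance-regularity of $L(H)$ forces $H$ to be a Moore graph or the incidence graph of a generalized $d'$-gon of order $(q,q)$, after which Feit--Higman and the $s=t$ constraint on octagons leave $d'\in\{3,4,6\}$. The one place where your sketch is thin is the ``crux'' paragraph: you correctly flag that distance-regularity of $L(H)$ is strictly stronger than that of $H$, but you do not actually carry out the translation of the intersection numbers of $L(H)$ into constraints on $H$ that force $H$ to be a Moore graph or a generalized polygon; in \cite{BCN} this is a genuine computation, not a one-liner, so if you intend this as a self-contained proof rather than a roadmap you would need to supply those details.
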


Recall that the triangular graph $T(n)$ is the line graph of the complete graph $K_{n}$, the lattice graph $L_{2}(n)$ is the line graph of the complete bipartite graph $K_{n,n}$ (a generalized $2$-gon), and the cocktail party graph $CP(n)$ is the complete multipartite graph with $n$ parts of size two. Note also that a projective plane is a generalized $3$-gon.

We note that the distance-regular graphs with least eigenvalue {\em larger} than $-2$ are also known. Besides the complete graphs (with least eigenvalue $-1$), there are the cycles of odd length, and these are clearly Cayley graphs.

 \subsection{Vertex-transitivity and edge-transitivity}

Recall that a graph $\Gamma$ is vertex-transitive whenever the automorphism group of $\Gamma$ acts transitivity on the vertex set of $\Gamma$, i.e. if $x$ is a fixed vertex of $\Gamma$, then $\{x^{\sigma}|\sigma \in \Aut(\Gamma)\}$ is equal to the set of vertices of $\Gamma$. It is clear that Cayley graphs are vertex-transitive. In fact, a graph $\Gamma$ is a Cayley graph if and only if the automorphism group $\Aut(\Gamma)$ of $\Gamma$ contains a regular subgroup, see \cite[Thm.~2.2]{BW}.

A graph $\Gamma$ is called edge-transitive whenever the automorphism group of $\Gamma$ acts transitivity on the edge set of the graph. Because line graphs play an important role in this paper, also the concept of edge-transitivity is relevant. Indeed, the following result provides us with a connection between the vertex-transitivity of the line graph of a graph $\Gamma$ and the edge-transitivity of $\Gamma$.

 \begin{thm} \cite[Thm.~5.3]{Sa} \label{line isomorphism}
 Let $\Gamma$ be a connected graph which is not isomorphic to the complete graphs $K_{2}$, $K_{4}$, a triangle with an extra edge attached, and two triangles sharing an edge. Then the automorphism group of $\Gamma$ and its line graph are isomorphic, with the natural group isomorphism $\varphi : \Aut(\Gamma) \rightarrow \Aut(L(\Gamma))$, defined by $\varphi(\sigma)=\widetilde{\sigma}$  for $\sigma \in \Aut(\Gamma)$,
where $\widetilde{\sigma}$ acts on the line graph of $\Gamma$ such that $\widetilde{\sigma}(\{v,w\})=\{\sigma(v),\sigma(w)\}$, where $v$ and $w$ are adjacent in $\Gamma$.
 \end{thm}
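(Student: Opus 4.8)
The plan is to show that the explicitly given map $\varphi:\Aut(\G)\to\Aut(L(\G))$ is a group isomorphism by checking, in turn, that it is a well-defined homomorphism, that it is injective, and that it is surjective; the surjectivity is the reconstruction of $\G$ from $L(\G)$, and this is where essentially all of the work — and the four listed exceptions — resides. First I would record the easy direction. If $\sigma\in\Aut(\G)$ and $\{v,w\},\{x,y\}$ are edges of $\G$, then they share a vertex if and only if their images $\{\sigma(v),\sigma(w)\}$ and $\{\sigma(x),\sigma(y)\}$ do; hence $\widetilde\sigma$ preserves the adjacency relation of $L(\G)$ and is indeed an automorphism of $L(\G)$. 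The identity $\widetilde{\sigma\tau}=\widetilde\sigma\,\widetilde\tau$ is immediate from the definition, so $\varphi$ is a homomorphism.

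For injectivity, suppose $\widetilde\sigma=\mathrm{id}$, i.e. $\sigma$ fixes every edge of $\G$ setwise. Since $\G$ is connected and not $K_2$, it has at least three vertices and therefore a vertex $v$ of degree at least two, say with distinct neighbours $w_1,w_2$. If $\sigma(v)=w_1$, then fixing $\{v,w_2\}$ setwise would force $w_1\in\{v,w_2\}$, which is impossible; hence $\sigma(v)=v$. Now for any neighbour $w$ of a fixed vertex $u$, the edge $\{u,w\}$ is fixed setwise and $\sigma(u)=u$, so $\sigma(w)=w$; by connectedness this propagates to all of $V(\G)$, giving $\sigma=\mathrm{id}$ and hence injectivity of $\varphi$.

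The crux is surjectivity, which I would approach by reconstructing the vertices of $\G$ inside $L(\G)$ in an automorphism-invariant way. For a vertex $v$ of $\G$ let $S(v)$ denote the set of edges incident with $v$; each such \emph{vertex-star} is a clique of $L(\G)$, and every vertex of $L(\G)$ (i.e. every edge $\{x,y\}$ of $\G$) lies in exactly the two stars $S(x)$ and $S(y)$. The key structural fact is the Helly-type property for edges: any family of pairwise intersecting distinct edges of $\G$ either has a common vertex or consists of the three edges of a triangle. Consequently every clique of $L(\G)$ is contained in a vertex-star or is a \emph{triangle-clique}, and in particular every clique of size at least four is a vertex-star. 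Given $\psi\in\Aut(L(\G))$, the goal is to show that $\psi$ permutes the vertex-stars; one then defines $\sigma$ by $\psi(S(v))=S(\sigma(v))$, observes that $u\sim v$ in $\G$ exactly when $S(u)\cap S(v)\neq\varnothing$ (a $\psi$-invariant condition), so that $\sigma\in\Aut(\G)$, and finally checks $\widetilde\sigma=\psi$ by tracking, for each edge $\{u,v\}$, the unique common vertex of $S(u)$ and $S(v)$.

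I expect the main obstacle to be precisely the canonical identification of the vertex-stars, and this is where the exceptional graphs enter. The only ambiguity left by the Helly property is for cliques of size exactly three, which could be either the star of a degree-three vertex or a triangle-clique; a priori $\psi$ might interchange these two types and thus fail to come from a vertex permutation. The bulk of the argument is therefore a finite case analysis showing that for connected $\G$ outside the list $K_2$, $K_4$, the triangle with a pendant edge, and two triangles sharing an edge, the family of vertex-stars is intrinsically determined and hence preserved by every $\psi$, whereas for each of these four small graphs $L(\G)$ genuinely admits an automorphism mixing triangle-cliques with degree-three stars (equivalently, $\Aut(L(\G))$ is strictly larger than the image of $\varphi$, and for $K_2$ the map $\varphi$ even fails to be injective). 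Once this uniqueness is established, surjectivity follows and $\varphi$ is the asserted isomorphism.
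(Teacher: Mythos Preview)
The paper does not prove this theorem; it is quoted from Sabidussi \cite[Thm.~5.3]{Sa} as a known result and used as a black box (in particular, to derive Lemma~\ref{lem:edgevertextransitive}). There is therefore no in-paper proof to compare your proposal against.

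That said, your plan is the standard one and is essentially how the result is established in the literature: the homomorphism and injectivity steps are routine (and your argument for both is correct once $\Gamma\neq K_2$ is assumed), while surjectivity is Whitney's reconstruction of $\Gamma$ from $L(\Gamma)$ via the intrinsic identification of vertex-stars among the maximal cliques of $L(\Gamma)$. You have correctly isolated the only genuine obstruction, namely the possible confusion between a triangle-clique and the star of a degree-$3$ vertex, and correctly asserted that the four listed graphs are exactly those for which this confusion cannot be resolved by local structure. One small wording issue: it is the \emph{maximal} cliques of size at least four that coincide with vertex-stars, not arbitrary cliques of size at least four (those are merely contained in a star); this does not affect the argument but should be stated precisely. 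The remaining ``finite case analysis'' you allude to is indeed where the work lies and would need to be written out for a self-contained proof, but as a proof plan your outline is sound and aligned with Sabidussi's original argument.
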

 \begin{lem}\label{lem:edgevertextransitive}
A connected regular graph is edge-transitive if and only if its line graph is vertex-transitive.
 \end{lem}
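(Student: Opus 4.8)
The plan is to exploit the identification of the vertices of the line graph $L(\Gamma)$ with the edges of $\Gamma$, together with the natural map $\varphi:\Aut(\Gamma)\to\Aut(L(\Gamma))$ from Theorem \ref{line isomorphism}. Recall that a vertex of $L(\Gamma)$ is precisely an edge $\{v,w\}$ of $\Gamma$, and that $\varphi$ sends $\sigma\in\Aut(\Gamma)$ to the map $\widetilde{\sigma}$ with $\widetilde{\sigma}(\{v,w\})=\{\sigma(v),\sigma(w)\}$. The whole lemma amounts to transporting transitivity of a group action back and forth across $\varphi$, so the two implications will split according to which property of $\varphi$ I use: for one direction I only need that $\varphi$ is \emph{defined}, and for the other I need it to be \emph{surjective}.

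For the forward implication I would suppose $\Gamma$ edge-transitive and argue directly, without invoking the isomorphism. The point is that $\varphi$ is always a well-defined homomorphism into $\Aut(L(\Gamma))$, since $\widetilde{\sigma}$ visibly preserves the ``sharing an endpoint'' adjacency of $L(\Gamma)$. Then, given two vertices $\{v,w\}$ and $\{x,y\}$ of $L(\Gamma)$ regarded as edges of $\Gamma$, edge-transitivity supplies $\sigma\in\Aut(\Gamma)$ with $\sigma(\{v,w\})=\{x,y\}$, so $\widetilde{\sigma}$ carries the first vertex to the second. Hence $L(\Gamma)$ is vertex-transitive; note this half uses neither regularity nor Theorem \ref{line isomorphism}.

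For the converse I would assume $L(\Gamma)$ vertex-transitive and treat first the generic case in which $\Gamma$ is \emph{not} one of the four graphs excluded in Theorem \ref{line isomorphism}. There $\varphi$ is an isomorphism, hence surjective, and the plan is: given edges $\{v,w\}$ and $\{x,y\}$ of $\Gamma$, vertex-transitivity of $L(\Gamma)$ yields $\tau\in\Aut(L(\Gamma))$ mapping the one to the other; writing $\tau=\widetilde{\sigma}=\varphi(\sigma)$, the preimage $\sigma$ then maps $\{v,w\}$ to $\{x,y\}$ as an edge, so $\Gamma$ is edge-transitive. This is where surjectivity of $\varphi$, i.e.\ the full strength of Theorem \ref{line isomorphism}, is essential.

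The only remaining point—and the one I expect to be the main (if modest) obstacle—is the handling of the excluded graphs, where Theorem \ref{line isomorphism} does not apply. The four exceptions are $K_2$, $K_4$, a triangle with a pendant edge, and two triangles sharing an edge; of these only $K_2$ and $K_4$ are regular, the other two having vertices of differing degrees. Since the hypothesis restricts us to connected \emph{regular} graphs, I would simply observe that regularity prunes the exceptional list down to the complete graphs $K_2$ and $K_4$, and verify the equivalence for them by hand: both are edge-transitive, so the converse implication holds with its conclusion already true. Thus the genuine content is the transfer across $\varphi$, and the subtlety to get right is that regularity is exactly what removes the non-complete exceptions of Sabidussi's theorem.
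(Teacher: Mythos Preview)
Your proposal is correct and follows essentially the same approach as the paper: reduce to Theorem~\ref{line isomorphism} in the generic case, and observe that regularity cuts the list of exceptions down to $K_2$ and $K_4$, which are handled directly. The paper's proof is terser (it treats both directions at once via the isomorphism and does not separate out that the forward implication needs only that $\varphi$ is defined), but the structure and the key observation about the exceptional list are the same.
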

 \begin{proof}
 Let $\Gamma$ be connected and regular. If $\Gamma$ is isomorphic to $K_{2}$ or $K_{4}$, then $\Gamma$ is edge-transitive and the line graph of $\Gamma$ is vertex-transitive. On the other hand, if $\Gamma$ is not isomorphic to $K_{2}$ or $K_{4}$, then the automorphism group of $\Gamma$ and its line graph are isomorphic with the natural group isomorphism by Theorem \ref{line isomorphism}, which completes the proof. \qed
 \end{proof}

\subsection{Groups and products}

Two subgroups $H$ and $K$ in $G$ are conjugate whenever there exists an element $g \in G$ such that $K=g^{-1}Hg$. The semidirect product $G$ of a group $N$ by a group $H$ is denoted by $H\ltimes N$ or $N\rtimes H$. It has the property that it contains a normal subgroup $N_1$ isomorphic to $N$ and a subgroup $H_1$ isomorphic to $H$ such that $G=N_1H_1$ and $N_1\cap H_1=\{e\}$.

Let $G$ be a finite group with subgroups $H$ and $K$ such that $G=HK$ and the intersection of $H$ and $K$ is the identity of $G$. Then $G$ is called a general product of $H$ and $K$ (see \cite{Co}).

Finally we mention a result that we will use in Section \ref{sec:projplane}.

\begin{thm} \cite[Thm.~9.1.2]{R} \label{Hall}
 Let $N$ be a normal subgroup of a finite group $G$, and let $n=|N|$, and $m=[G:N]$. Suppose that $n$ and $m$ are relatively prime. Then $G$ contains subgroups of order $m$ and any two of them are conjugate in $G$.
\end{thm}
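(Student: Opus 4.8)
The plan is to recognize this as the Schur–Zassenhaus theorem and to prove both assertions, the existence of a subgroup of order $m$ (a complement to $N$) and the conjugacy of any two such subgroups, by induction on $|G|$, reducing in each case to the situation in which $N$ is abelian. The arithmetic hypothesis $\gcd(n,m)=1$ will be used throughout in the form that $m$ is invertible modulo the exponent of $N$: there is an integer $m'$ with $mm'\equiv 1$ modulo $\exp(N)$, so that when $N$ is abelian the map $x\mapsto x^{m'}$ inverts $x\mapsto x^{m}$ on $N$. This invertibility is exactly what allows the cohomological obstructions to vanish.

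First I would settle existence when $N$ is abelian. Choosing a transversal $t\colon G/N\to G$ of the cosets, the deviation from a homomorphism is recorded by a factor set $f(\bar a,\bar b)\in N$ defined by $t(\bar a)\,t(\bar b)=f(\bar a,\bar b)\,t(\overline{ab})$, and associativity forces the $2$-cocycle identity. Setting $d(\bar a)=\prod_{\bar x\in G/N}f(\bar a,\bar x)\in N$ and summing the cocycle identity over the second variable, one finds that $d$ differs from a coboundary only by the map $x\mapsto x^{m}$; replacing $t(\bar a)$ by $d(\bar a)^{-m'}t(\bar a)$ then produces a transversal that is a genuine homomorphism $G/N\to G$, whose image is a complement of order $m$. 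In cohomological language this is the vanishing of $H^{2}(G/N,N)$ for coprime orders.

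To remove the abelian hypothesis I would induct on $|G|$. Pick a prime $p\mid n$ and a Sylow $p$-subgroup $P$ of $N$; the Frattini argument gives $G=N\,N_G(P)$, so $M:=N_G(P)$ satisfies $M/(N\cap M)\cong G/N$, whence $[M:N\cap M]=m$ with $|N\cap M|$ still coprime to $m$. If $M\neq G$, the inductive hypothesis applied to $M$ yields the desired subgroup of order $m$, which is also a subgroup of $G$. Otherwise $P\trianglelefteq G$, and then $Z=Z(P)$ is a nontrivial characteristic subgroup of $P$, hence normal in $G$; applying induction to $G/Z$ (with normal subgroup $N/Z$) and then handling the preimage of the resulting complement, where $Z$ is abelian and normal of coprime index, finishes the existence part via the abelian base case.

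The conjugacy statement follows the same pattern. In the abelian base case, two complements $H_1,H_2$ give rise to a function $c\colon G/N\to N$ relating their representatives, and the homomorphism property of both forces $c$ to be a $1$-cocycle; averaging $c$ over $G/N$ together with the $m'$-th power trick exhibits an element of $N$ conjugating $H_2$ onto $H_1$, which is the vanishing of $H^{1}(G/N,N)$. The same Frattini-and-center induction then upgrades this to arbitrary $N$. The main obstacle is that, unlike existence, the inductive reduction for conjugacy genuinely needs one of the two factors to be solvable in order to supply a nontrivial characteristic abelian section on which to induct; here the coprimality $\gcd(n,m)=1$ rescues the argument, since one of $n,m$ is odd, and the Feit–Thompson theorem then makes the corresponding factor ($N$ or $G/N$) solvable, so the reduction to the abelian case always goes through.
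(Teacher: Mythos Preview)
The paper does not prove this theorem at all: it is quoted verbatim from Robinson's textbook \cite[Thm.~9.1.2]{R} and used as a black box in the proof of Proposition~\ref{incidence}. There is therefore no ``paper's own proof'' to compare against.

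That said, your outline is the standard Schur--Zassenhaus argument and is essentially correct. The existence part via the averaging of the factor set (vanishing of $H^{2}$), followed by the Frattini reduction and the passage to $Z(P)$, is the textbook proof. For the conjugacy part you correctly identify the classical obstacle: the inductive reduction needs a solvable factor, and you invoke Feit--Thompson to supply one. This is exactly how the unconditional conjugacy statement is obtained in modern treatments (and is also how Robinson handles it). One small remark: your phrase ``the same Frattini-and-center induction then upgrades this to arbitrary $N$'' glosses over the fact that the conjugacy reduction is organized somewhat differently depending on whether $N$ or $G/N$ is the solvable factor; in the latter case one typically inducts via a minimal normal subgroup of $G/N$ pulled back to $G$, rather than via Sylow subgroups of $N$. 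But as a proof plan this is fine, and far more than the paper itself supplies.
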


\section{Some results on generalized polygons}\label{sec:genpol}

Let $\G$ be a distance-regular line graph of the incidence graph of a generalized $d$-gon of order $(q,q)$. Then $\G$ can also be seen as the point graph of a generalized $2d$-gon of order $(q,1)$. It follows that each vertex of $\G$ is contained in two maximal cliques, of size $q+1$, and every edge of $\G$ is contained in a unique maximal clique. Thus, $\G$ does not have $K_{1,3}$ nor $K_{1,2,1}$ as an induced subgraph. Moreover, $\G$ has diameter $d$ and every induced cycle in $\G$ is either a $3$-cycle or a $2d$-cycle. We will use these properties to derive some general results on the structure of the connection set in case $\G$ is a Cayley graph.

\begin{thm} \label{Cayley line structure}
Let $d \geq 2$, let $\Gamma$ be the line graph of the incidence graph of a generalized $d$-gon of order $(q,q)$, and suppose that $\Gamma$ is a Cayley graph $\Cay(G,S)$. Then there exist two subgroups $H$ and $K$ of $G$ such that $S=(H \cup K) \setminus \{e\}$, with $|H|=|K|=q+1$ and $H \cap K=\{e\}$ if and only if $\seq{a} \subseteq S \cup \{e\}$ for every element $a$ of order $2d$ in $S$.
\end{thm}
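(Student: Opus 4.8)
The plan is to translate the existence of the subgroups $H$ and $K$ into a statement about the two maximal cliques through the identity, and then to reduce the substantial direction to a single closure property. First I would record the local structure of $\G=\Cay(G,S)$ coming from the properties noted just above: the identity vertex $e$ lies in exactly two maximal cliques $C_1,C_2$, each of size $q+1$, meeting only in $e$, so that $S=(C_1\setminus\{e\})\cup(C_2\setminus\{e\})$ is a disjoint union with no edges between $C_1\setminus\{e\}$ and $C_2\setminus\{e\}$ (such an edge, together with $e$, would be a triangle forced to lie in both $C_1$ and $C_2$). Since every edge lies in a unique maximal clique, every triangle lies in a single maximal clique; and since the right translations $x\mapsto xg$ are automorphisms of $\G$, the two maximal cliques through any vertex $g$ are $C_1g$ and $C_2g$. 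The upshot is that producing $H,K$ as in the statement is \emph{equivalent} to showing that $C_1$ and $C_2$ are themselves subgroups: if they are, take $H=C_1$, $K=C_2$; conversely any $H,K$ as described are $(q+1)$-cliques containing $e$, hence maximal cliques through $e$, hence $\{H,K\}=\{C_1,C_2\}$.

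Granting the subgroups, the condition is immediate: an element $a\in S$ of order $2d$ lies in $H$ or $K$, say $a\in H$, whence $\seq{a}\subseteq H\subseteq S\cup\{e\}$ (indeed this works for every $a\in S$). For the converse I would reduce ``$C_1$ is a subgroup'' to ``$C_1$ is closed under inversion.'' Put $H^{*}=\{g\in G:\ C_1g=C_1\}$, a subgroup with $H^{*}\subseteq C_1$ (as $e\in C_1$), so that $C_1$ is a union of right cosets of $H^{*}$. For $g\in C_1\setminus\{e\}$ the clique $C_1$ is one of the two maximal cliques through $g$, namely $C_1g$ and $C_2g$; hence either $C_1=C_1g$ (i.e.\ $g\in H^{*}$) or $C_1=C_2g$ (i.e.\ $g^{-1}\in C_2$), and these are mutually exclusive. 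Consequently $C_1$ is closed under inversion exactly when the second alternative never occurs, i.e.\ when $C_1=H^{*}$, in which case $C_1$ is a subgroup. So everything reduces to deriving a contradiction, under the hypothesis, from a \emph{bad} element $g\in C_1\setminus\{e\}$ with $g^{-1}\in C_2$.

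Here I expect the real work. Given such a $g$, a triangle argument gives $g^{2}\notin S$: otherwise $e,g,g^{2}$ is a triangle whose edges $\{e,g\}$ and $\{g,g^{2}\}$ lie in $C_1$ and $C_1g$, forcing $C_1=C_1g$ and contradicting $g^{-1}\in C_2$. More globally, tracking which clique carries each edge $\{g^{i},g^{i+1}\}$ shows the powers of $g$ trace a non-backtracking closed walk, and a computation of which powers $g^{k}$ lie in $S$ (using $H^{*}$ and the stabilizer of $C_2$) yields $g^{k}\in S\cup\{e\}$ precisely when $k\equiv 0,\pm1 \pmod{\ell}$, where $\ell$ is the least positive integer with $g^{\ell}\in H^{*}$. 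Thus $\G$ induces on $\seq{g}$ the blow-up of the cycle $C_\ell$ by cliques. Since every induced cycle of $\G$ is a triangle or a $2d$-cycle, and since $\ell=2$ or $\ell=3$ would force $g^{2}\in S$, the blow-up's induced $\ell$-cycle forces $\ell=2d$ (using $d\geq2$). In particular $\seq{g}$ carries an induced $2d$-cycle and $O(g)$ is a multiple of $2d$.

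The crux—and the step I expect to be the main obstacle—is to upgrade this to an element of order \emph{exactly} $2d$ in $S$ whose cyclic group is not contained in $S\cup\{e\}$, which is exactly what the hypothesis forbids. If $O(g)=2d$ this is immediate, since then $\seq{g}\not\subseteq S\cup\{e\}$ because $g^{2}\notin S$; and if $t:=O(g)/2d$ is coprime to $2d$, a suitable power $g^{tu}$ has order $2d$ and is again bad, so it does the job. The delicate case is $\gcd(t,2d)>1$, where \emph{no} power of $g$ has order $2d$, and one must instead use the global structure of $G$: the whole coset $gH^{*}$ consists of bad elements, so I would choose a bad element of minimal order and argue—via this coset structure together with the divisibility constraints on $H^{*}$ inside the $(q+1)$-clique $C_1$—that its order cannot be a proper multiple of $2d$, thereby producing the required order-$2d$ witness and contradicting the hypothesis.
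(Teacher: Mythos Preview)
Your setup (the two maximal cliques $C_1,C_2$ through $e$, the equivalence with ``$C_1$ is closed under inversion'', and the derivation $g^{2}\notin S$ for a bad $g$) is correct and well organized. The genuine gap is exactly where you flag it: the case $O(g)=2dt$ with $\gcd(t,2d)>1$. Your proposed fix via a minimal-order bad element and coset arithmetic in $C_1$ is not carried out, and there is no visible mechanism forcing the minimal bad order to equal $2d$; nothing in the stabilizer picture on $C_1$ controls the \emph{orders} of bad elements, only their coset structure. As it stands, the argument does not close.

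The paper avoids this whole detour by proving a stronger intermediate statement first: for \emph{every} $a\in S$ (not just those of order $2d$) one has $\langle a\rangle\subseteq S\cup\{e\}$. For $n=O(a)\geq 4$ with $n\neq 2d$, the Hamiltonian cycle $e\sim a\sim a^{2}\sim\cdots\sim a^{n-1}\sim e$ on $\langle a\rangle$ cannot be induced (induced cycles have length $3$ or $2d$), so some chord $a^{i}\in S$ exists with $1<i<n-1$. One then uses that $\Gamma$ has no induced $K_{1,3}$ (applied to $e$ versus $a,a^{-1},a^{i}$) and no induced $K_{1,2,1}$ (applied to four-vertex configurations such as $\{e,a,a^{i-1},a^{i}\}$ and $\{e,a^{-1},a^{i-1},a^{i}\}$) to spread the chord and force all powers of $a$ into $S$. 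Together with the hypothesis for order $2d$, this gives $\langle a\rangle\subseteq S\cup\{e\}$ for all $a\in S$. Each $\langle a\rangle$ is then a clique through $e$, hence lies entirely in $C_1$ or in $C_2$; in particular $a$ and $a^{-1}$ always land in the same $C_i$, and closure under products follows in one line. Note that this spreading argument would also plug your gap directly: if your bad $g$ had $O(g)\neq 2d$, the lemma yields $g^{2}\in S$, contradicting what you already derived; so the only surviving case is $O(g)=2d$, which the hypothesis kills.
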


\begin{proof} One direction is clear: if there are subgroups $H$ and $K$ of $G$ such that $S=(H \cup K) \setminus \{e\}$, then $\seq{a} \subseteq S \cup \{e\}$ for every element $a$ in $S$. To show the other direction, assume that $\seq{a} \subseteq S \cup \{e\}$ for every element $a$ of order $2d$ in $S$.

We first claim that $\seq{a} \subseteq S \cup \{e\}$ for {\em all} $a \in S$. In order to prove this, let $a\in S$ and $n=O(a) \neq 2d$. If $n=2$ or $3$, then $\seq{a} \subseteq S \cup \{e\}$ since $S=S^{-1}$. If $n \geq 4$, then it is clear that the induced subgraph $\G_{\seq{a}}$ of $\G$ on $\seq{a}$ contains a cycle $e \sim a \sim a^2 \sim \cdots \sim a^{n-1} \sim e$ of length $n$ (see also \cite[Lemma 2.6]{AV}). Because $n \neq 3$ and $n\neq 2d$, it follows that this cycle is not an induced cycle. Thus, there must be an extra edge in $\G_{\seq{a}}$, that is, an edge that is not generated by $a$ or $a^{-1}$, and hence $a^i \in S$ for some $i$ with $1<i<n-1$. Now $e$ is adjacent to $a,a^{-1}$, and $a^i$, and because $\G$ does not contain an induced subgraph $K_{1,3}$, it follows that $a^2 \in S$, or $a^{i-1}\in S$, or $a^{i+1}\in S$. Let us consider the case that $a^{i-1} \in S$, with $i>2$. By considering the induced subgraph on $\{e,a,a^{i-1},a^{i}\}$, it follows that $a^{i-2}\in S$ because $\G$ does not contain an induced subgraph $K_{1,2,1}$. Similarly, by considering the induced subgraph on $\{e,a^{-1},a^{i-1},a^{i}\}$, it follows that $a^{i+1}\in S$. By repeating this argument, it follows that $\seq{a} \subseteq S \cup \{e\}$. The other cases go similarly, which proves our claim.

Let $H$ and $K$ be the two cliques of size $q+1$ that contain $e$. Then $S=(H \cup K) \setminus \{e\}$ and $H \cap K=\{e\}$. What remains to be shown is that $H$ and $K$ are subgroups of $G$.

Let $a\in H \setminus \{e\}$. Because the graph induced on $\seq{a}$ is a clique, and there are no edges between $H \setminus \{e\}$ and $K \setminus \{e\}$, it follows that $\seq{a} \subseteq H$, In particular, $a^{-1}\in H$.

Now let $a,b \in H$, and let us show that $ba^{-1} \in H$, thus showing that $H$ is a subgroup of $G$. If $a=b$, $a=e$, or $b=e$, then this clearly implies that $ba^{-1} \in H$. In the other cases, we have that $b \sim a$, so $ba^{-1} \in S$. Because $ba^{-1} \sim a^{-1}$, and there are no edges between $H \setminus \{e\}$ and $K \setminus \{e\}$, it follows that $ba^{-1} \in H$. Thus, $H$ --- and similarly $K$ --- is a subgroup of $G$. \qed
\end{proof}

The condition that $\seq{a} \subseteq S \cup \{e\}$ for every element $a$ of order $2d$ in $S$ is not redundant. Indeed, the lattice graph $L_2(2)$, which is the line graph of $K_{2,2}$ (the incidence graph of a generalized $2$-gon of order $(1,1)$) is isomorphic to the Cayley graph $\Cay(\mathbb{Z}_4,\{\pm1\})$. Both elements $a$ in $S =\{\pm1\}$ have order $4$, but $a^2 \notin S$, and indeed $S \cup \{e\}$ does not contain a nontrivial subgroup of $\mathbb{Z}_4$.

The proof of Theorem \ref{Cayley line structure} indicates that the condition $\seq{a} \subseteq S \cup \{e\}$ for every element $a$ of order $2d$ in $S$ can be replaced by the condition that $a^2 \in S$ for every element $a$ of order $2d$ in $S$. We can in fact generalize this as follows.

\begin{cor}
Let $d \geq 2$, let $\Gamma$ be the line graph of the incidence graph of a generalized $d$-gon of order $(q,q)$, and suppose that $\Gamma$ is a Cayley graph $\Cay(G,S)$. Then there exist two subgroups $H$ and $K$ of $G$ such that $S=(H \cup K) \setminus \{e\}$, with $|H|=|K|=q+1$ and $H \cap K=\{e\}$ if and only if for every element $a$ of order $2d$, there exists an element $s \in S$ such that $s \neq a$, $a^{-1}$ and $sas^{-1} \in S$.
\end{cor}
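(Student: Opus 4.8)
The plan is to deduce the corollary from Theorem~\ref{Cayley line structure} together with the remark following it. By that theorem and remark, the existence of subgroups $H,K$ with $S=(H\cup K)\setminus\{e\}$, $|H|=|K|=q+1$ and $H\cap K=\{e\}$ is equivalent to the condition that $a^2\in S$ for every element $a$ of order $2d$ in $S$. Hence it suffices to prove that this condition is equivalent to the one displayed in the corollary (where, as in the theorem, I read $a$ as ranging over the elements of order $2d$ lying in $S$, these being the only ones the subgroup description constrains). Throughout I will use the structural facts that already hold for an arbitrary Cayley representation $\Cay(G,S)$ of $\G$: writing $M_1,M_2$ for the two maximal cliques through $e$, one has $S=(M_1\cup M_2)\setminus\{e\}$ and $M_1\cap M_2=\{e\}$; every edge lies in a unique maximal clique, so two distinct maximal cliques meet in at most one vertex; since $\G$ has no induced $K_{1,3}$ or $K_{1,2,1}$, two neighbours of $e$ are adjacent precisely when they lie in a common $M_i$; and every right translation $x\mapsto xg$ is an automorphism of $\Cay(G,S)$.

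One direction is immediate: if $a^2\in S$ for every $a$ of order $2d$ in $S$, then, given such an $a$, the element $s=a^2$ lies in $S$, satisfies $s\neq a,a^{-1}$ (as $2d\geq4$), and has $sas^{-1}=a\in S$, so the displayed condition holds.

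For the converse I assume the displayed condition, fix $a\in S$ of order $2d$ with its accompanying $s$, and argue by contradiction, supposing $a^2\notin S$; say $a\in M_1$. The key step is the identity $M_2a=M_1$: the automorphism $x\mapsto xa$ carries $\{M_1,M_2\}$ onto the two maximal cliques through $a$, one of which is $M_1$ itself, and $M_1a=M_1$ is impossible, since with $e\in M_1$ it would give $\seq{a}\subseteq M_1$ and hence $a^2\in M_1\setminus\{e\}\subseteq S$. I then split into two cases. If $a\not\sim s$, then $s\in M_2$, so $sa\in M_2a=M_1$; as $s\neq a^{-1}$ we have $sa\neq e$, whence $e\sim sa$, and together with $e\sim s$ and $s\sim sa$ (the latter because $(sa)s^{-1}=sas^{-1}\in S$) this exhibits a triangle $\{e,s,sa\}$ whose edges $\{e,s\}$ and $\{e,sa\}$ lie in $M_2$ and $M_1$ respectively --- impossible, since a triangle lies in a single maximal clique. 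If instead $a\sim s$, then $s\in M_1$, and the four distinct vertices $e,a,s,sa$ are pairwise adjacent except possibly for the pair $\{e,sa\}$; were $e\not\sim sa$ they would induce a $K_{1,2,1}$, so in fact $e\sim sa$ and all four lie in $M_1$, yet $sa\in M_1a$ with $M_1a\neq M_1$ forces $sa\in M_1\cap M_1a=\{a\}$ and hence $s=e$, a contradiction. Thus $a^2\in S$, as required.

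The main obstacle is the case $a\not\sim s$, and in particular keeping the argument uniform across all $d\geq2$. For $d\geq3$ one could rule this case out through the girth of $\G$ (the four-cycle $e,a,sa,s$ cannot be induced), but for $d=2$ the graph does contain induced four-cycles, so no such shortcut is available. The identity $M_2a=M_1$, valid for every $d\geq2$, is exactly what replaces the girth argument: it forces $sa$ back into $M_1$ and thereby produces the triangle contradiction. The second point demanding care is simply to confirm that all the structural facts invoked above hold for an arbitrary Cayley representation of $\G$, independently of the subgroup conclusion one is trying to reach.
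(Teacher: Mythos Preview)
Your argument is correct. Both you and the paper reduce the hard direction to showing $a^2\in S$ (equivalently $a\sim a^{-1}$) for each $a\in S$ of order $2d$, but you reach that conclusion by a genuinely different route.

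The paper looks at the three neighbours $a,a^{-1},s$ of $e$ and uses $K_{1,3}$-freeness to force an edge among them; assuming $a\not\sim a^{-1}$, it may take (without loss of generality) $s\sim a^{-1}$, and then two applications of $K_{1,2,1}$-freeness on the quadruples $\{e,sa,a,s\}$ and $\{e,a,a^{-1},s\}$ yield $a\sim s$ and then $a\sim a^{-1}$, a contradiction. Your proof instead brings in the clique geometry directly: under the assumption $a^2\notin S$ you establish $M_2a=M_1$ (the content of the paper's Remark following the Corollary) and then split on whether $s$ lies in $M_1$ or $M_2$. In the $s\in M_2$ case you get a triangle $\{e,s,sa\}$ whose edges live in different maximal cliques, an immediate contradiction; in the $s\in M_1$ case one use of $K_{1,2,1}$-freeness forces $sa\in M_1$, while $sa\in M_1a$ and $M_1\cap M_1a=\{a\}$ give $s=e$. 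Your approach avoids the $K_{1,3}$ step and never needs to bring $a^{-1}$ into the picture; the paper's approach, in turn, avoids translating cliques and stays entirely at the level of forbidden subgraphs. Both are short and clean; yours has the pleasant feature of making the identity $M_2a=M_1$ do real work inside the proof rather than only as a post hoc remark.
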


\begin{proof}
Let $a$ be of order $2d$ in $S$, and assume that there exists an element $s \in S$ such that $s \neq a$, $a^{-1}$ and $sas^{-1} \in S$. By Theorem \ref{Cayley line structure}, it suffices to prove that $\seq{a} \subseteq S \cup \{e\}$. Because $e$ is adjacent to $a$, $a^{-1}$, and $s$, and $G$ has no induced subgraph $K_{1,3}$, it follows that there is at least one edge within $\{a,a^{-1},s\}$. If $a$ and $a^{-1}$ are adjacent, then $a^2 \in S$. Because $\G$ does not contain an induced subgraph $K_{1,2,1}$, it then follows by induction and by considering the induced subgraph on $\{e,a,a^{i},a^{i+1}\}$ (for $i \geq 2$) that $\seq{a} \subseteq S \cup \{e\}$. So let us assume that $a$ and $a^{-1}$ are not adjacent. Without loss of generality, we may thus assume that $s$ is adjacent $a^{-1}$, and hence that $sa \in S$. Now $e$ is adjacent to $sa$, $a$ and $s$. Furthermore, $sa$ is adjacent to $a$ and $s$ since $sas^{-1} \in S$. It follows, again because $\G$ does not contain an induced subgraph $K_{1,2,1}$, that $a$ is adjacent to $s$. Using the same argument once more gives that $a$ and $a^{-1}$ are adjacent, which is a contradiction that finishes the proof. \qed
\end{proof}

\begin{rem}
In view of the above, if there exists an element $a \in S$ of order $2d$ such that $\seq{a} \nsubseteq S \cup \{e\}$, then $a$ and $a^{-1}$ are not adjacent. We may therefore assume that $a \in H$ and $a^{-1} \in K$, where $H$ and $K$ are the two maximal cliques (but not subgroups) containing $e$. But clearly the set $Ka$ is a maximal clique containing $a$ and $e$. Because every edge is in a unique maximal clique, it follows that $Ka = H$. Therefore, in this case, $S=(K \cup Ka)\setminus \{e\}$. In the case of the Cayley graph $\Cay(\mathbb{Z}_4,\{\pm1\})$, we indeed have $K=\{-1,0\}$ and $H=K+1$.
\end{rem}

As a first application of the above, we obtain that the (distance-regular) line graph of the Tutte-Coxeter graph is not a Cayley graph.

 \begin{prop} \label{tuttecoxeter}
 The line graph of the Tutte-Coxeter graph is not a Cayley graph.
 \end{prop}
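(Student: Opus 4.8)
The plan is to argue by contradiction, invoking Theorem \ref{Cayley line structure}. First I would recall the identification: the Tutte-Coxeter graph is the incidence graph of the generalized quadrangle of order $(2,2)$, that is, a generalized $d$-gon of order $(q,q)$ with $d=4$ and $q=2$. Hence its line graph $\G$ is exactly of the form treated in Theorem \ref{Cayley line structure} (and is distance-regular by Theorem \ref{thm:drg-2}). Since the Tutte-Coxeter graph is cubic on $30$ vertices, it has $45$ edges, so $\G$ has $45$ vertices. Thus, if $\G$ were a Cayley graph $\Cay(G,S)$, we would have $|G|=45$, and $\G$ being $4$-regular would force $|S|=4$.

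Next I would exploit the arithmetic of $|G|=45=3^{2}\cdot 5$. By Sylow's theorems the Sylow $3$-subgroup and the Sylow $5$-subgroup are both normal, so $G$ is abelian; in particular the Sylow $3$-subgroup $P$ of order $9$ is unique, and every element order divides $45$. The key observation is that $G$ then has no element of order $2d=8$, since $8\nmid 45$. Consequently the hypothesis ``$\seq{a}\subseteq S\cup\{e\}$ for every element $a$ of order $2d$ in $S$'' of Theorem \ref{Cayley line structure} holds vacuously, and the theorem supplies subgroups $H,K\leq G$ with $S=(H\cup K)\setminus\{e\}$, $|H|=|K|=q+1=3$, and $H\cap K=\{e\}$.

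Finally I would derive the contradiction from connectivity. As $\G$ is connected, we must have $\seq{S}=G$. But $H$ and $K$ are $3$-subgroups, hence both contained in the unique Sylow $3$-subgroup $P$, so $\seq{S}=\seq{H\cup K}=HK\subseteq P$, giving $|\seq{S}|\leq |P|=9<45$ (equivalently, $|HK|=|H|\,|K|/|H\cap K|=9$). This contradicts $\seq{S}=G$, so $\G$ is not a Cayley graph.

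The argument is short once Theorem \ref{Cayley line structure} is available; its only real content is the coprimality of $45$ and $8$, which makes the order-$2d$ hypothesis vacuous and lets two order-$3$ subgroups generate a group of order at most $9$, far too small to be all of $G$. I expect the only point needing care to be the bookkeeping at the start---correctly identifying the Tutte-Coxeter graph as the incidence graph of $GQ(2,2)$ and thereby fixing $q=2$, $d=4$, and the vertex count $45$; after that, no case distinction between the two abelian groups of order $45$ is required.
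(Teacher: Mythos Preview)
Your proof is correct and follows essentially the same approach as the paper: invoke Theorem~\ref{Cayley line structure} (its order-$8$ hypothesis being vacuous since $8\nmid 45$), obtain two subgroups $H,K$ of order $3$, and contradict connectivity. Your packaging is slightly cleaner---the paper splits into the two abelian groups $\mathbb{Z}_3\times\mathbb{Z}_3\times\mathbb{Z}_5$ and $\mathbb{Z}_9\times\mathbb{Z}_5$ and rules one out before noting disconnectedness, whereas you observe directly that $H$ and $K$ must lie in the unique Sylow $3$-subgroup, so $\langle S\rangle$ has order at most $9$---but the substance is the same.
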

\begin{proof}The Tutte-Coxeter graph is the incidence graph of a generalized quadrangle ($4$-gon) of order $(2,2)$. It has $30$ vertices and $45$ edges. If its line graph is a Cayley graph $Cay(G,S)$, then $|G|=45$ and $|S|=4$. Because $G$ has no element of order $8$, it follows from Theorem \ref{Cayley line structure} that there exist two subgroups $H$ and $K$ of $G$ such that $S=(H \cup K) \setminus \{e\}$, where $|H|=|K|=3$ and $H \cap K=\{e\}$. Furthermore, the group $G$ is an abelian
 group isomorphic to $\mathbb{Z}_{3} \times \mathbb{Z}_{3} \times \mathbb{Z}_{5}$ or $\mathbb{Z}_{9} \times \mathbb{Z}_{5}$ since $G$ has only one subgroup of order $9$ and one subgroup of order $5$ by Sylow's theorems. By the structure of the connection set $S$, it now follows that $G$ must be the abelian group isomorphic to $\mathbb{Z}_{3} \times \mathbb{Z}_{3} \times \mathbb{Z}_{5}$ but in this case the Cayley graph $Cay(G,S)$ is not connected, a contradiction. Therefore the line graph of the Tutte-Coxeter graph is not a Cayley graph. \qed
\end{proof}

We finish this section with a result that shows that the obtained structure of $S$ in the above fits naturally with line graphs of bipartite graphs.

\begin{lem}\label{lem:line}
Let $\G$ be a Cayley graph $\Cay(G,S)$, where $S=(H \cup K) \setminus \{e\}$ for nontrivial subgroups $H$ and $K$ of $G$ such that $H \cap K=\{e\}$. Then $\G$ is the line graph of a bipartite graph.
\end{lem}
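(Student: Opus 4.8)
The plan is to exhibit an explicit bipartite graph $B$ whose line graph is isomorphic to $\Gamma$, using the two subgroups $H$ and $K$ to supply the two ``directions'' of cliques that characterise line graphs of bipartite graphs. The key observation is that in $\Cay(G,S)$ with $S=(H\cup K)\setminus\{e\}$, two vertices $a,b$ are adjacent precisely when $ab^{-1}\in H\setminus\{e\}$ or $ab^{-1}\in K\setminus\{e\}$; the first happens exactly when $a$ and $b$ lie in a common right coset $Hg$, the second exactly when they lie in a common right coset $Kg$. Thus $\Gamma$ is covered by two families of cliques, namely the right cosets of $H$ and the right cosets of $K$, and every vertex lies in exactly one clique from each family.

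First I would define the bipartite graph $B$ with vertex classes $X=\{Hg: g\in G\}$ and $Y=\{Kg: g\in G\}$ (the right cosets of $H$ and of $K$), declaring $Hx\in X$ adjacent to $Ky\in Y$ whenever $Hx\cap Ky\neq\emptyset$. The crucial input of the hypothesis $H\cap K=\{e\}$ is that any two elements lying in a common coset of $H$ and a common coset of $K$ must coincide; hence $|Hx\cap Ky|\le 1$ for all $x,y$, so that $B$ is a simple graph and each of its edges may be labelled by the unique element of $G$ in the corresponding intersection.

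Next I would set up the bijection between the edge set of $B$ and $G$ by sending $g\in G$ to the edge $\{Hg,Kg\}$. This is well defined since $g\in Hg\cap Kg$, and it is injective because $Hg=Hg'$ together with $Kg=Kg'$ forces $g(g')^{-1}\in H\cap K=\{e\}$; surjectivity is immediate from the definition of adjacency in $B$. Finally I would verify that this bijection is a graph isomorphism $\Gamma\to L(B)$: two distinct edges $\{Hg,Kg\}$ and $\{Hg',Kg'\}$ of $B$ share an endpoint if and only if $Hg=Hg'$ or $Kg=Kg'$, i.e.\ if and only if $g(g')^{-1}\in(H\cup K)\setminus\{e\}=S$, which is exactly the adjacency of $g$ and $g'$ in $\Cay(G,S)$.

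The only real subtlety, and the step I would be most careful about, is the repeated use of the two hypotheses. The condition $H\cap K=\{e\}$ is needed both to ensure that $B$ is simple (so that its line graph is defined in the usual way) and to make $g\mapsto\{Hg,Kg\}$ a genuine bijection rather than merely a surjection. The nontriviality of $H$ and $K$ enters through the requirement that the bipartition be genuine: a right coset of $H$ can coincide (as a subset of $G$) with a right coset of $K$ only when $H=K$, which together with $H\cap K=\{e\}$ would force $H=K=\{e\}$; since $H$ and $K$ are nontrivial this cannot occur, so the classes $X$ and $Y$ are disjoint and $B$ is truly bipartite. Beyond these points everything is a direct translation between coset membership and adjacency, and no genuine computation is required.
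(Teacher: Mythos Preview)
Your argument is correct. Both your proof and the paper's rest on the same underlying observation---that the right cosets of $H$ and of $K$ form two families of cliques covering $\Gamma$, with each vertex in exactly one clique from each family---but the executions differ. The paper first notes that every vertex lies in two maximal cliques and every edge in a unique maximal clique, and then invokes Krausz's characterisation of line graphs to conclude that $\Gamma=L(\Gamma')$ for some $\Gamma'$ whose vertices are these cliques; the bipartition of $\Gamma'$ then comes from separating ``$H$-cliques'' from ``$K$-cliques''. You instead build the bipartite graph $B$ explicitly on the two coset spaces and verify by hand that $g\mapsto\{Hg,Kg\}$ is an isomorphism $\Gamma\to L(B)$. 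Your route is more elementary and self-contained (no appeal to Krausz), and it makes the coset description of $\Gamma'$ explicit from the outset; the paper's route is shorter once Krausz is granted and emphasises the clique-partition viewpoint that is used elsewhere in the article. Your careful check that no $H$-coset equals a $K$-coset (so that $X$ and $Y$ are genuinely disjoint) is a nice touch, though one could also sidestep it by simply taking the formal disjoint union $X\sqcup Y$ as the vertex set of $B$.
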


\begin{proof}
From the structure of $S$, it follows that each vertex is in two maximal cliques, and every edge is in a unique maximal clique. By a result of Krausz \cite{Kr} (see \cite[Thm.~7.1.16]{W}) it follows that $\G$ is a line graph of a graph, $\G'$, say. The graph $\G'$ has the maximal cliques of $\G$ as vertices, and two such cliques are adjacent in $\G'$ if and only if they intersect; the corresponding edge in $\G'$ is the vertex in $\G$ that is contained in both cliques.

Because $S=(H \cup K) \setminus \{e\}$ and $H \cap K=\{e\}$, we can distinguish between two kinds of maximal cliques. We call such a clique an $H$-clique if the edges in the clique are generated by an element in $H$, and the other cliques are similarly called $K$-cliques. Now it is clear that every edge in $\G'$ has one vertex in the set of $H$-cliques and the other vertex in the set of $K$-cliques. Thus $\G'$ is bipartite. \qed
\end{proof}
% -----------------------------------------------------------
\section{Strongly regular graphs}\label{sec:srg}
In this section, we will determine which strongly regular graphs with least eigenvalue $-2$ are Cayley graphs, using the case of diameter $d=2$ in the classification given in Theorem \ref{thm:drg-2}.

\subsection{The sporadic graphs}

Besides the three infinite families of strongly regular graphs with least eigenvalue $-2$, we have to consider the Petersen graph, the Clebsch graph, the Shrikhande graph, the Schl\"{a}fli graph, and the Chang graphs. The Petersen graph is the unique strongly regular graph with parameters $(10,3,0,1)$. It is the complement of the line graph of the complete graph $K_{5}$, and therefore it is not a Cayley graph, by Corollary \ref{triangular} below (see also \cite[Lemma 3.1.3]{Go}).

\begin{prop}(Folklore). \label{Petersen_no}
The Petersen graph is not a Cayley graph.
\end{prop}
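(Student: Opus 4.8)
The plan is to use the fact that the Petersen graph $P$ has $10$ vertices and is $3$-regular, so if it were a Cayley graph $\Cay(G,S)$ then necessarily $|G|=10$ and $S=S^{-1}$ with $|S|=3$. Up to isomorphism there are exactly two groups of order $10$, namely $\mathbb{Z}_{10}$ and the dihedral group $D_{10}=\seq{r,s}$ with $r^5=s^2=e$ and $srs=r^{-1}$, so it suffices to rule out each of these. Throughout I would exploit the well-known structural invariants of $P$: it has girth $5$ (in particular no triangles and no $4$-cycles) and it is not bipartite.

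First I would dispose of the cyclic case. Since $S$ is inverse-closed of odd size $3$, it must contain an odd number of involutions, and the only involution in $\mathbb{Z}_{10}$ is $5$; hence $S=\{5,j,-j\}$ for some $j\in\{1,2,3,4\}$. Then the four distinct vertices $0,\,j,\,j+5,\,5$ form a $4$-cycle, since the consecutive differences are $j,\,5,\,-j,\,5$, all of which lie in $S$. This contradicts the girth of $P$, so no circulant on $\mathbb{Z}_{10}$ is the Petersen graph.

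Next I would treat $G=D_{10}$ by enumerating the inverse-closed sets $S$ of size $3$. The involutions of $D_{10}$ are exactly the five reflections $sr^i$, while the rotations pair up as $\{r,r^{-1}\}$ and $\{r^2,r^{-2}\}$; so either (a) $S$ consists of three reflections, or (b) $S$ is one reflection together with an inverse-pair $\{r^{\pm a}\}$ of rotations. In case (a) every generator is a reflection, hence every edge joins the rotation coset $\seq{r}$ to the reflection coset $s\seq{r}$, so $\Cay(G,S)$ is bipartite; but $P$ is not bipartite, a contradiction. In case (b) I would exhibit an explicit short cycle: for the representative $S=\{s,r,r^{-1}\}$ the vertices $e,\,r,\,sr,\,s$ form a $4$-cycle (the successive quotients being $r,\,s,\,r^{-1},\,s\in S$), again contradicting girth $5$. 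To know this representative suffices, I would note that conjugation by powers of $r$ shifts the index $i$ of the reflection $sr^i$ by even amounts and so reaches every value (as $5$ is odd), while the automorphism $r\mapsto r^2,\ s\mapsto s$ of $D_{10}$ identifies the choice $a=1$ with $a=2$; thus all connection sets in case (b) give isomorphic Cayley graphs.

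The main obstacle is the dihedral case, and within it subcase (b), since that is the only place where one must actually produce a cycle rather than invoke a parity or bipartiteness argument; the work there lies in the bookkeeping needed to reduce the apparent variety of connection sets (the choice of reflection and of rotation-pair) to a single representative via conjugation and the outer symmetry of $D_{10}$. As an alternative route that bypasses this enumeration, one could instead use $\Aut(P)\cong S_5$ acting on the $2$-subsets of a $5$-set and check directly that $S_5$ contains no regular subgroup of order $10$: it has no element of order $10$, ruling out $\mathbb{Z}_{10}$, and its unique conjugacy class of $D_{10}$-subgroups fails to act freely on the $10$ pairs (a reflection fixes a pair setwise), ruling out $D_{10}$.
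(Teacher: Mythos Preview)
Your argument is correct and self-contained. The enumeration of the two groups of order $10$ is complete, the girth/bipartiteness contradictions are valid, and your reduction in the dihedral case (b) to the single representative $S=\{s,r,r^{-1}\}$ via inner automorphisms and the outer automorphism $r\mapsto r^2$ is sound. (A tiny slip: with the paper's convention $a\sim b \Leftrightarrow ab^{-1}\in S$, the third quotient along $e,r,sr,s$ is $s(sr)^{-1}=sr^{-1}s=r$ rather than $r^{-1}$; this is harmless since both lie in $S$.) Your alternative via $\Aut(P)\cong S_5$ is also correct.

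However, the paper does not prove this proposition by enumeration. It simply observes that the Petersen graph is the complement of the triangular graph $T(5)=L(K_5)$, and then invokes Corollary~\ref{triangular} (Sabidussi's result, derived here from Godsil's classification of Cayley Kneser graphs), which says $T(n)$ is Cayley only when $n\in\{2,3,4\}$ or $n$ is a prime power congruent to $3\pmod 4$; since $5\equiv 1\pmod 4$, $T(5)$ is not Cayley, and complementation preserves the Cayley property. So the paper's route is a one-line deduction from a much deeper classification that it needs anyway, whereas your route is elementary and independent of that machinery. Your approach has the advantage of being fully self-contained and of illustrating directly why the local structure (girth $5$, non-bipartite) obstructs a regular group action; the paper's approach is shorter in context and ties the result into the broader narrative about triangular and Kneser graphs.
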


It is well-known that the complement of the Clebsch graph is the folded $5$-cube, which is strongly regular with parameters $(16,5,0,2)$ (see \cite[p.~119]{BH}). The $d$-dimensional cube $Q_{d}$ is the distance-regular graph whose vertex set can be labeled with the $2^{d}$ binary $d$-tuples such that two vertices are adjacent whenever their labels differ in exactly one position (clearly this is a Cayley graph). The folded $d$-cube is the distance-regular graph that can be obtained from the cube $Q_{d-1}$ by adding a perfect matching that connects vertices at distance $d-1$ (see \cite{van Bon}). It is evident that the folded $d$-cube is the Cayley graph $\Cay(G,S)$, where $G$ is the elementary abelian $2$-group of order $2^{d-1}$ and
\begin{equation*}
S=\{(1,0,0,\ldots,0), (0,1,0,\ldots,0), \ldots , (0,0,0,\ldots,0,1), (1,1,\ldots,1)\}.
\end{equation*}
Thus, the Clebsch graph is a Cayley graph.

The Shrikhande graph is a strongly regular graph with the same parameters as the lattice graph $L_{2}(4)$ and can be constructed as a Cayley graph
\begin{equation*}
 \Cay(\mathbb{Z}_{4} \times \mathbb{Z}_{4}, \{\pm (0,1), \pm (1,0), \pm (1,-1)\}).
\end{equation*}
This construction `on the torus' is accredited to Biggs \cite{Biggs} by Gol'fand, Ivanov, and Klin \cite[p.~182]{GIK}.

The Schl\"{a}fli graph is the unique strongly regular graphs with parameters $(27,16,10,8)$. It follows from the work by Liebeck, Praeger, and Saxl \cite{LPS} (see also \cite[Lemma 2.6]{LPM}) that it is a Cayley graph over the semidirect product $\mathbb{Z}_{9} \rtimes \mathbb{Z}_{3}$. Using {\sf GAP} \cite{GAP}, we checked that with $G=\mathbb{Z}_{9} \rtimes \mathbb{Z}_{3}=\seq{a,b|a^9=b^3=1,b^{-1}ab=a^7}$ and $S=\{a,a^8,a^3,a^6,b,b^2,a^7b,a^5b^2,a^2b,a^4b^2\}$, the Cayley graph $\G=\Cay(G,S)$ indeed is the complement of the Schl\"{a}fli graph. Note that $\G$ is also the point graph of the unique generalized quadrangle of order $(2,4)$, with lines thus being the triangles in $\G$. Therefore, these lines can be obtained as the right `cosets' of the five triangles $\{e,a,a^2b\}$, $\{e,a^8,a^7b\}$,$\{e,a^3,a^6\}$,$\{e,b,b^2\}$,$\{e,a^5b^2,a^4b^2\}$ through $e$.

The Schl\"{a}fli graph can also be constructed as a Cayley graph over $(\mathbb{Z}_{3} \times \mathbb{Z}_{3}) \rtimes \mathbb{Z}_{3}$, the other nonabelian group of order 27. Indeed, we again checked with {\sf GAP} \cite{GAP} that $\G=\Cay(G',S')$ for $G'= \seq{a,b,c|a^3=b^3=c^3=e, abc=ba,ac=ca,bc=cb}$ and $S'=\{a,a^2,b,b^2,c,c^2,cba,a^2b^2c^2,aba,bab\}$. In this case all nonidentity elements of the group have order $3$, and hence the triangles through $e$ are subgroups $H_1,\dots,H_5$ of $G'$, with trivial intersection and $S'=(H_1\cup\cdots\cup H_5) \setminus \{e\}$ (cf.~Theorem \ref{Cayley line structure}). Again, the cosets of these subgroups give the lines of the generalized quadrangle of order $(2,4)$. From the above, we conclude the following.

\begin{prop} \label{sporadicsrg} The Clebsch graph, the Shrikhande graph, and the Schl\"{a}fli graph are Cayley graphs.
\end{prop}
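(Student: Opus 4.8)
The plan is to prove each of the three assertions by exhibiting an explicit Cayley graph representation and checking that it produces the graph in question; for two of the three it is cleanest to work with the complement, using the elementary observation that the complement of a Cayley graph $\Cay(G,S)$ is again a Cayley graph over the same group, namely $\Cay(G,(G\setminus\{e\})\setminus S)$. So the whole proposition reduces to three concrete verifications, one per graph.

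First I would dispose of the Clebsch graph. Since its complement is the folded $5$-cube, and the folded $5$-cube has already been exhibited as the Cayley graph $\Cay(G,S)$ with $G$ the elementary abelian $2$-group of order $16$ and $S$ the displayed connection set, the complementation remark immediately yields that the Clebsch graph is the Cayley graph $\Cay(G,(G\setminus\{e\})\setminus S)$ over the same group. Nothing further is needed here beyond recording this.

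Next I would treat the Shrikhande graph. I would take the displayed connection set $\{\pm(0,1),\pm(1,0),\pm(1,-1)\}$ in $\mathbb{Z}_4\times\mathbb{Z}_4$ and verify directly that the resulting $6$-regular Cayley graph on $16$ vertices is strongly regular with parameters $(16,6,2,2)$, by counting common neighbors of adjacent and of non-adjacent pairs; by vertex-transitivity this reduces to counting relative to the identity. The delicate point is that the lattice graph $L_2(4)$ has the very same parameters, so I must certify that the construction yields the Shrikhande graph and not $L_2(4)$. The cleanest invariant is the local structure: in the Shrikhande graph the subgraph induced on the neighborhood of each vertex is a single $6$-cycle, whereas in $L_2(4)$ it is a disjoint union of two triangles. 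Checking that the six neighbors of $e$ induce a $6$-cycle settles the identification.

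Finally, for the Schläfli graph I would invoke its uniqueness as the strongly regular graph with parameters $(27,16,10,8)$. It then suffices to verify that one of the displayed Cayley graphs on a nonabelian group of order $27$ — say $\Cay(G,S)$ with $G=\seq{a,b\mid a^9=b^3=1,\,b^{-1}ab=a^7}$ — is strongly regular with the complementary parameters $(27,10,1,5)$; uniqueness then forces it to be the complement of the Schläfli graph, and complementation gives the claim. The verification of these parameters is the computational heart of the argument, and here I would rely on the {\sf GAP} computation already reported. The main obstacle across the whole proposition is therefore not conceptual but verificational: separating the two cospectral graphs on $16$ vertices, and certifying strong regularity of the $27$-vertex construction.
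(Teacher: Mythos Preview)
Your proposal is correct and follows essentially the same route as the paper: the Clebsch case via the folded $5$-cube and complementation, the Shrikhande case via the explicit $\mathbb{Z}_4\times\mathbb{Z}_4$ construction, and the Schl\"afli case via the nonabelian group of order $27$ together with a {\sf GAP} check. The only cosmetic difference is that the paper simply attributes the Shrikhande construction to Biggs and cites Liebeck--Praeger--Saxl for the Schl\"afli graph, whereas you spell out the verification steps (the $6$-cycle neighborhood distinguishing Shrikhande from $L_2(4)$, and uniqueness of the $(27,16,10,8)$ parameters); both amount to the same argument.
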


The Chang graphs are strongly regular graphs with the same parameters as the line graph of the complete graph $K_{8}$. These three graphs can be obtained by Seidel switching in $L(K_{8})$. According to \cite{Math2}, the orders of the automorphism groups of these graphs are $384$, $360$, and $96$, respectively.
\begin{prop} \label{chang}
The three Chang graphs are not Cayley graphs.
\end{prop}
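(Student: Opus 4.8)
The plan is to invoke the criterion, already recorded in the excerpt, that a graph is a Cayley graph if and only if its automorphism group contains a \emph{regular} subgroup \cite[Thm.~2.2]{BW}. Since a regular subgroup acts sharply transitively on the vertices, it must have order equal to the number of vertices. So the whole argument reduces to a divisibility check against the known orders of the automorphism groups.

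First I would pin down the number of vertices. The Chang graphs share the parameters of $L(K_8)=T(8)$, and since $T(8)$ has $\binom{8}{2}=28$ vertices and Seidel switching preserves the vertex set, each Chang graph has exactly $28$ vertices. Consequently, if a Chang graph were a Cayley graph, its automorphism group would contain a regular subgroup of order $28=2^2\cdot 7$, and by Lagrange's theorem $28$ would divide the order of that automorphism group.

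Next I would carry out the arithmetic using the automorphism group orders quoted from \cite{Math2}, namely $384=2^7\cdot 3$, $360=2^3\cdot 3^2\cdot 5$, and $96=2^5\cdot 3$. None of these integers is divisible by $7$, hence none is divisible by $28$. Therefore none of the three automorphism groups can possess a subgroup of order $28$, so none of them contains a regular subgroup acting on the $28$ vertices. By the Cayley criterion, none of the Chang graphs is a Cayley graph.

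There is no genuine obstacle here: once the vertex count ($28$) and the automorphism group orders are in hand, the result follows from a one-line Lagrange/divisibility observation (the prime $7$ appears in $28$ but in none of $384$, $360$, $96$). The only points requiring a little care are confirming that all three graphs indeed have $28$ vertices and correctly invoking the regular-subgroup characterization of Cayley graphs; everything else is routine.
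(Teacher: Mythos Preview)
Your proof is correct and follows essentially the same approach as the paper: both arguments reduce to the observation that $28$ must divide $|\Aut(\Gamma)|$ (you via Lagrange applied to a regular subgroup, the paper via the orbit--stabilizer theorem applied to a transitive action), and then check that $28\nmid 384,360,96$.
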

\begin{proof}
Let $\Gamma$ be one of the Chang graphs, and suppose on the contrary that it is a Cayley graph, and hence that it is vertex-transitive. Let $x$ be a fixed vertex in $\Gamma$. Then the order of $\{x^{\sigma}| \sigma \in \Aut(\Gamma)\}$ is equal to $28$ since $\Gamma$ is vertex-transitive. It follows that the index of $\Aut(\Gamma)$ over the stabilizer of $x$ is $28$. Therefore $28$ must divide the order of $\Aut(\Gamma)$, which is a contradiction. \qed
\end{proof}

\subsection{The infinite families}

A cocktail party graph is a complete multipartite graph with parts of size two, and clearly such a graph is a Cayley graph. By \cite[Prop.~2.6]{AJ1}, we obtain the following result.
\begin{prop} \label{cocktail}
A Cayley graph $\Cay(G,S)$ is a cocktail party graph if and only if $G$ has an element $a$ of order $2$ and $S=G \setminus \seq{a}$ .
\end{prop}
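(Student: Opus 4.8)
The plan is to prove both implications directly from the combinatorial description of the cocktail party graph $CP(n)$ as the complete graph $K_{2n}$ with a perfect matching deleted; equivalently, $CP(n)$ is a regular graph on $2n$ vertices in which every vertex has exactly one non-neighbour. For the forward direction, suppose $a\in G$ has $O(a)=2$ and $S=G\setminus\seq{a}=G\setminus\{e,a\}$. First I would verify that $S$ is a legitimate connection set: $e\notin S$, and $S$ is inverse-closed because its complement $\{e,a\}$ is inverse-closed (since $e^{-1}=e$ and $a^{-1}=a$). In $\Cay(G,S)$ a vertex $x$ fails to be adjacent to $y$ precisely when $xy^{-1}\in\{e,a\}$, that is, when $y=x$ or $y=a^{-1}x=ax$. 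Hence the unique non-neighbour of each vertex $x$ is $ax$. Because $a$ has order $2$, the map $x\mapsto ax$ is a fixed-point-free involution on $G$, so the pairs $\{x,ax\}$ form a perfect matching and $\Cay(G,S)$ is $K_{|G|}$ with this matching removed, i.e. the cocktail party graph $CP(|G|/2)$.

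For the reverse direction, suppose $\Cay(G,S)$ is a cocktail party graph. Then by definition every vertex has exactly one non-neighbour, and I would read this off at the identity vertex $e$. The non-neighbours of $e$ other than $e$ itself are exactly the elements of $G\setminus(S\cup\{e\})$, and there is precisely one of them, say $a$, with $a\neq e$. Thus $S=G\setminus\{e,a\}$. Since $S$ is inverse-closed, so is its complement $\{e,a\}$, which forces $a^{-1}=a$ (as $a\neq e$), i.e. $O(a)=2$. Consequently $\seq{a}=\{e,a\}$ and $S=G\setminus\seq{a}$, as required.

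The argument is elementary and I do not anticipate a genuine obstacle. The one place where the group structure — rather than pure graph combinatorics — is really used is the equivalence between ``$a$ is the unique non-neighbour of $e$'' and ``$O(a)=2$'': in the reverse direction this is obtained from the inverse-closure $S=S^{-1}$, and in the forward direction the order-$2$ hypothesis is exactly what guarantees that $x\mapsto ax$ is fixed-point-free, so that the deleted edges constitute a perfect matching rather than a partial one. Everything else amounts to unwinding the definition of the Cayley graph and of $CP(n)$, so the only care needed is to handle the edge condition $xy^{-1}\in\{e,a\}$ cleanly and to confirm the non-adjacency relation is a perfect matching.
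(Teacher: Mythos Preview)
Your argument is correct. The paper itself does not prove this proposition; it simply cites \cite[Prop.~2.6]{AJ1}, so there is no in-paper proof to compare against. Your elementary approach---reading off the unique non-neighbour of $e$ to identify $a$, and using inverse-closure of $S$ to force $a^{-1}=a$---is exactly the natural one and would serve as a self-contained replacement for the citation.

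One cosmetic remark: you have the direction labels reversed relative to the usual convention. In the statement ``$\Cay(G,S)$ is a cocktail party graph if and only if $G$ has an element $a$ of order $2$ and $S=G\setminus\seq{a}$'', the forward implication is the one that \emph{assumes} $\Cay(G,S)$ is a cocktail party graph; you call that the reverse direction. This does not affect the mathematics, but you may wish to swap the labels before submitting.
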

Consider a set $X$ of size $n$ and let $V$ be the collection of all subsets of size $m$ in $X$, with $m \geq 2$ and $n \geq 2m+1$. The Kneser graph $K(n,m)$ is the graph with vertex set $V$ such that two vertices $A$ and $B$ in $V$ are adjacent whenever $|A \cap B|=0$. The Kneser graph $K(n,2)$ is the complement of the triangular graph $T(n)$. Godsil \cite{G} characterized the Cayley graphs among the Kneser graphs.
\begin{thm} \cite{G}
Except in the following cases, the Kneser graph $K(n,m)$ is not a Cayley graph.
\begin{itemize}
\item $m=2$, $n$ is a prime power and $n\equiv 3 \text{\em{ (mod 4)}}$,
\item $m=3$, $n=8$ or $n=32$.
\end{itemize}
\end{thm}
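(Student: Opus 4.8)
The plan is to convert the statement entirely into a question about regular subgroups. For $n \geq 2m+1$ the automorphism group of $\Gamma = K(n,m)$ is the symmetric group $S_n$ acting on the $m$-subsets of an $n$-set; this standard fact follows from the Erd\H{o}s--Ko--Rado description of the maximum cocliques of $K(n,m)$ as the ``stars'' $\{A : x\in A\}$, which any automorphism must permute, thereby being induced by a permutation of the $n$ points. By the criterion quoted as \cite[Thm.~2.2]{BW}, $K(n,m)$ is then a Cayley graph if and only if $S_n$ contains a subgroup $R$ acting regularly on the $\binom{n}{m}$ vertices, that is, a subgroup of order $\binom{n}{m}$ acting transitively (hence sharply transitively) on the $m$-subsets.

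First I would record two elementary constraints on such an $R$. Since $1 \le m \le n-1$, transitivity on $m$-subsets forces transitivity on the $n$ points, so $R$ is a transitive subgroup of $S_n$, and transitivity on $m$-subsets means exactly that $R$ is $m$-homogeneous. On the other hand $R$ is \emph{not} $m$-transitive when $m \ge 2$: an $m$-transitive group has order divisible by $n(n-1)\cdots(n-m+1) = m!\binom{n}{m}$, which strictly exceeds $|R| = \binom{n}{m}$. Thus $R$ is an $m$-homogeneous, non-$m$-transitive permutation group of degree $n$ with $m \le (n-1)/2 < n/2$, which is precisely the regime handled by the classification of such groups.

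The engine of the proof is that classification, due to Kantor and resting on the homogeneity-reduction theorem of Livingstone and Wagner: for $2 \le m \le n/2$ every $m$-homogeneous but not $m$-transitive group lies in a short explicit list, all with $m \le 4$. I would then run through this list imposing the order condition $|R| = \binom{n}{m}$. For $m=2$ the relevant groups sit between the translations-with-square-multipliers and $A\Gamma L(1,q)$ with $n=q$ a prime power; the half-affine group $\{x \mapsto ax+b : a \in (\mathbb{F}_q^{*})^{2}\}$ has order $q(q-1)/2 = \binom{q}{2}$, and since $-1$ is a non-square exactly when $q \equiv 3 \pmod 4$, it is sharply transitive on unordered pairs precisely then, yielding the family $m=2$, $n=q\equiv 3\pmod 4$. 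For $m=3$ the groups in the range $PSL(2,q) \le R \le P\Gamma L(2,q)$ (with $n=q+1$, $q\equiv 3\pmod 4$) are excluded because any such $R$ contains $PSL(2,q)$, whose order $3\binom{q+1}{3}$ already exceeds $\binom{q+1}{3}$; among the sporadic affine candidates on $8$ and $32$ points the order condition selects only $AGL(1,8)$, with $|AGL(1,8)| = 56 = \binom{8}{3}$, and $A\Gamma L(1,32)$, with $|A\Gamma L(1,32)| = 4960 = \binom{32}{3}$, each sharply $3$-homogeneous, giving the exceptions $n=8$ and $n=32$. For $m=4$ a direct order comparison eliminates every candidate (for instance $|P\Gamma L(2,32)| = 4\binom{33}{4}$ on $33$ points), and for $m \ge 5$ the list is empty. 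This reproduces exactly the two stated exceptional families and shows $K(n,m)$ is not a Cayley graph in all remaining cases.

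The main obstacle is, as usual, the classification of $m$-homogeneous permutation groups: it is this deep group-theoretic input, through the Livingstone--Wagner reduction and Kantor's analysis, that controls which transitive subgroups of $S_n$ of the exact order $\binom{n}{m}$ can occur. Everything else --- the reduction to regular subgroups, the failure of $m$-transitivity by the order count, and the case-by-case order comparisons that single out $q \equiv 3 \pmod 4$ for $m=2$ and $n\in\{8,32\}$ for $m=3$ --- is routine once the classification is available.
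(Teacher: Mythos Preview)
The paper does not give a proof of this theorem at all: it is stated as a cited result from Godsil \cite{G} and used as a black box to derive Corollary~\ref{triangular}. Your sketch is essentially Godsil's original argument --- reduce to finding a regular subgroup of $\Aut(K(n,m))=S_n$, observe that such a subgroup is $m$-homogeneous but (by the order bound) not $m$-transitive, and then invoke the Livingstone--Wagner/Kantor classification and check orders case by case --- and the details you give (the half-affine group for $m=2$ with $q\equiv 3\pmod 4$, $AGL(1,8)$ and $A\Gamma L(1,32)$ for $m=3$, and the order overshoots for $m=4$) are all correct. So your proposal is sound, but there is nothing to compare it to in this paper beyond the citation; it simply supplies the proof the authors chose to omit.
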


As a corollary, we obtain a result first obtained by Sabidussi \cite{SaVTG}. Note that the triangular graphs $T(2)$ and $T(3)$ are complete graphs, and that $T(4)$ is isomorphic to the cocktail party graph $CP(3)$.
\begin{cor} \cite{SaVTG} \label{triangular}
The triangular graph $T(n)$ is a Cayley graph if and only if $n=2,3,4$ or $n\equiv 3 \text{\em{ (mod 4)}}$ and $n$ is a prime power.
\end{cor}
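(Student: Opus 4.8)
The plan is to deduce this from Godsil's classification of Cayley Kneser graphs by passing to the complement. The load-bearing observation is that complementation preserves the property of being a Cayley graph: if $\G=\Cay(G,S)$, then its complement is $\Cay(G,(G\setminus\{e\})\setminus S)$ on the same group $G$, so $\G$ is a Cayley graph if and only if $\overline{\G}$ is. (Equivalently, a graph and its complement have the same automorphism group, hence $\Aut(\G)$ contains a regular subgroup exactly when $\Aut(\overline{\G})$ does, and by \cite[Thm.~2.2]{BW} this is precisely the Cayley condition.) The second ingredient is that for $n\geq 5$ the complement of the triangular graph $T(n)$ is exactly the Kneser graph $K(n,2)$: both have the $2$-subsets of an $n$-set as vertices, with $T(n)$ joining the intersecting pairs and $K(n,2)$ joining the disjoint pairs.

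Combining these, for $n\geq 5$ the graph $T(n)$ is a Cayley graph if and only if $K(n,2)$ is, and Godsil's theorem with $m=2$ says this happens exactly when $n$ is a prime power with $n\equiv 3 \pmod 4$. It then remains to treat the small values $n\in\{2,3,4\}$, for which $K(n,2)$ is not even defined (recall the standing assumption $n\geq 2m+1$, i.e. $n\geq 5$ for $m=2$). Here I would simply invoke the descriptions recorded just before the statement: $T(2)$ and $T(3)$ are complete graphs, hence trivially Cayley graphs, and $T(4)\cong CP(3)$ is a cocktail party graph, which is a Cayley graph by the earlier discussion. Assembling the two regimes yields precisely the list $n\in\{2,3,4\}$ together with the prime powers $n\equiv 3 \pmod 4$; note that $n=3$ appears consistently in both descriptions, so the two clauses do not conflict.

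I do not expect a substantive obstacle, since this is a short corollary of Godsil's theorem; the only points demanding care are bookkeeping. First, one must read the phrase ``except in the following cases'' in Godsil's theorem as a \emph{complete} classification, so that the listed parameters are exactly the Cayley Kneser graphs; this is what supplies the ``if'' direction (that the exceptional parameters really do produce Cayley graphs) rather than merely the ``only if'' direction. Second, one must isolate the small cases $n\in\{2,3,4\}$, where $\overline{T(n)}$ is not a Kneser graph, and verify them by hand. The identification $\overline{T(n)}=K(n,2)$ for $n\geq 5$ and the complement-invariance of the Cayley property are the two essential steps, and both are routine.
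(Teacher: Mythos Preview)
Your argument is correct and matches the paper's own route exactly: the paper states that $K(n,2)$ is the complement of $T(n)$ and then records the corollary immediately after Godsil's theorem, with the small cases $n\in\{2,3,4\}$ handled by the identifications $T(2),T(3)\cong K_n$ and $T(4)\cong CP(3)$ noted just before the statement. Your careful remark that Godsil's ``except in the following cases'' must be read as a two-sided classification (so that the listed parameters genuinely yield Cayley graphs) is the only point the paper leaves implicit.
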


Godsil \cite{mathoverflow} gave the following construction of the triangular graph $T(n)$ as a Cayley graph $\Cay(G,S)$ for prime powers $n\equiv 3$~(mod 4).
Let $\mathbb{F}$ be the field of order $n$. For $a,b \in \mathbb{F}$, let the map $T_{a,b}: \mathbb{F} \rightarrow \mathbb{F}$ be defined by $T_{a,b}(x)=ax+b$.
Let $G$ be the group of maps $T_{a,b}$, with $a$ a non-zero square and $b$ arbitrary. It is not hard to see that $G$ acts regularity on the edges of the complete graph $K_{n}$ (with vertex set $\mathbb{F}$), using that $-1$ is a non-square (whence the assumption that $n\equiv 3$~(mod 4)). As connection set $S$ one can take the set of maps $T_{a,b} \in G$ such that either $T_{a,b}(0) \in \{0,1\}$ or $T_{a,b}(1) \in \{0,1\}$ (thus mapping the vertex $\{0,1\}$ of the triangular graph to an adjacent vertex).

As a final family of strongly regular graphs with least eigenvalue $-2$, we consider the lattice graphs. Let $n \geq 2$. The lattice graph $L_2(n)$ is the line graph of the complete bipartite graph $K_{n,n}$. It is isomorphic to the Cartesian product of two complete graphs $K_n$, and hence to the Cayley graph
 $\Cay(\mathbb{Z}_{n} \times \mathbb{Z}_{n},\{(0,1),\ldots,(0,n-1),(1,0),\ldots,(n-1,0)\})$. Because $K_{n,n}$ is the incidence graph of a generalized $2$-gon, we can apply the results of Section \ref{sec:genpol}. We will use these to give a characterization of the lattice graphs as Cayley graphs.
\begin{thm} \label{lattice}
  Let $n \geq 2$, let $G$ be a finite group, $S$ be an inverse-closed subset of $G$, and let $\G=\Cay(G,S)$. Then the following hold:
  \begin{itemize}
  \item
  If $G$ is a general product of two of its subgroups $H$ and $K$ of order $n$ and $S=(H \cup K)\setminus \{e\}$, then $\G$ is isomorphic to the lattice graph $L_{2}(n)$,
  \item
  If $\G$ is isomorphic to the lattice graph $L_{2}(n)$ and $\seq{a} \subseteq S \cup \{e\}$ for every element $a$ of order $4$ in $S$, then $G$ is a general product of two of its subgroups $H$ and $K$ of order $n$ and $S=(H \cup K)\setminus \{e\}$.
  \end{itemize}
\end{thm}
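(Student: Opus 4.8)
The plan is to prove the two implications separately; the first is a direct construction of an isomorphism via cosets, and the second is an application of Theorem \ref{Cayley line structure} together with a counting argument. For the first bullet, assume $G$ is a general product of $H$ and $K$ with $|H|=|K|=n$, so that $|G|=|H||K|=n^2$ and $[G:H]=[G:K]=n$. The idea is to build an explicit isomorphism onto the lattice graph: I would define $\phi(g)=(Hg,Kg)$ from $G$ into the product of the set of right $H$-cosets with the set of right $K$-cosets. This map is injective, since $Hg_1=Hg_2$ and $Kg_1=Kg_2$ force $g_1g_2^{-1}\in H\cap K=\{e\}$, and hence bijective by the cardinality count $n^2=n\cdot n$. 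The crucial step is then matching adjacencies: $g_1\sim g_2$ in $\Gamma$ iff $g_1g_2^{-1}\in(H\cup K)\setminus\{e\}$, and because $H\cap K=\{e\}$ such an element lies in exactly one of $H\setminus\{e\}$ and $K\setminus\{e\}$; this translates into $\phi(g_1)$ and $\phi(g_2)$ agreeing in exactly one coordinate, which is precisely the adjacency rule of the lattice graph $L_2(n)$ (the Cartesian product of two copies of $K_n$). Alternatively, one may invoke Lemma \ref{lem:line} to see that $\Gamma$ is the line graph of a bipartite graph $\Gamma'$ whose two parts are the right cosets of $H$ and of $K$, and then check that every $H$-coset meets every $K$-coset in exactly one vertex (using $G=HK$ and $H\cap K=\{e\}$), so that $\Gamma'\cong K_{n,n}$ and $\Gamma\cong L_2(n)$.

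For the second bullet, the key observation is that $L_2(n)=L(K_{n,n})$ and that $K_{n,n}$ is the incidence graph of a generalized $2$-gon of order $(n-1,n-1)$. Thus, with $d=2$ and $q=n-1$, the standing hypothesis that $\seq{a}\subseteq S\cup\{e\}$ for every element $a$ of order $2d=4$ in $S$ is exactly the condition appearing in Theorem \ref{Cayley line structure}. Applying that theorem yields subgroups $H$ and $K$ of $G$ with $S=(H\cup K)\setminus\{e\}$, $|H|=|K|=q+1=n$, and $H\cap K=\{e\}$. It then remains only to upgrade this to the general-product statement, i.e. to show $G=HK$: since $\Gamma\cong L_2(n)$ has $n^2$ vertices we have $|G|=n^2$, while $|HK|=|H|\,|K|/|H\cap K|=n^2$; as $HK\subseteq G$ and both have cardinality $n^2$, we conclude $HK=G$, so that $G$ is a general product of $H$ and $K$.

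I expect the main obstacle to lie in the adjacency verification of the first bullet, namely matching the Cayley adjacency with the ``agree in exactly one coordinate'' rule; the one point requiring care is that a nonidentity element of $H$ can never also lie in $K$ (guaranteed by $H\cap K=\{e\}$), which is exactly what forces agreement in a single coordinate rather than possibly both. The second bullet is then essentially a bookkeeping corollary of Theorem \ref{Cayley line structure} once its applicability is recognized, the only genuine additional input being the cardinality count that promotes $S=(H\cup K)\setminus\{e\}$ to the conclusion $G=HK$.
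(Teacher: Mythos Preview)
Your proposal is correct. Both bullets reach the right conclusion, and there are no gaps.

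The approaches differ slightly from the paper's in both parts. For the first bullet, the paper does not build an explicit isomorphism via cosets; instead it argues entirely through Lemma~\ref{lem:line}: every vertex lies in two maximal cliques of size $n$, a count gives $2n$ maximal cliques, and the bipartite graph $\Gamma'$ produced by Lemma~\ref{lem:line} is then forced to be $K_{n,n}$ by regularity. Your coset map $\phi(g)=(Hg,Kg)$ is a more elementary and self-contained route that avoids invoking line-graph recognition altogether (your ``alternatively'' paragraph essentially recovers the paper's argument). For the second bullet, both the paper and you apply Theorem~\ref{Cayley line structure} to produce $H$ and $K$; the difference is in the step $G=HK$. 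You use the standard subgroup counting formula $|HK|=|H|\,|K|/|H\cap K|=n^2=|G|$, which is the cleanest way to finish. The paper instead argues structurally: the maximal clique $K$ meets every other vertex $g$ via a unique edge (a property of the lattice graph), and the corresponding element $gk^{-1}\in S$ must lie in $H$, so $g\in HK$. Your counting argument is shorter and uses nothing about the graph beyond $|G|=n^2$.
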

\begin{proof}
Let $G$ be a general product of two of its subgroups $H$ and $K$ of order $n$ and let $S=(H \cup K)\setminus \{e\}$. Then $|G|=n^2$. By using the results in Section \ref{sec:genpol}, we know that every vertex in $\G$ is in two maximal cliques of size $n$. A simple counting argument shows that there are $2|G|/n=2n$ maximal cliques in $\G$. By Lemma \ref{lem:line} and its proof, it follows that $\G$ is the line graph of a bipartite graph $\G'$ on the $2n$ maximal cliques of $\G$, and that each clique is on $n$ edges in $\G'$. This implies that $\G'$ is the complete bipartite graph $K_{n,n}$, and hence $\G$ is the lattice graph $L_2(n)$.

To prove the second item, suppose that $\G$ is isomorphic to the lattice graph $L_{2}(n)$ and $\seq{a} \subseteq S \cup \{e\}$ for every element $a$ of order $4$ in $S$. It follows by Theorem \ref{Cayley line structure} that there are subgroups $H$ and $K$ of order $n$ in $G$ such that $H \cap K=\{e\}$ and $S=(H \cup K)\setminus \{e\}$. Now $K$ is a maximal clique in $\G$. Let $g$ be a vertex not in $K$. Then the structure of the lattice graph implies that $g$ is adjacent to precisely one vertex $k \in K$. Thus $gk^{-1} \in S$, and hence it follows that $gk^{-1} \in H$ (because if it were in $K$, then so would $g$), so $g=hk$ for some $h \in H$. Therefore $G$ is the general product of $H$ and $K$, which completes the proof. \qed
\end{proof}
We recall from Section \ref{sec:genpol} that the lattice graph $L_2(2)$ is isomorphic to the Cayley graph $\Cay(\mathbb{Z}_4,\{\pm1\})$, which is an example such that $G=\mathbb{Z}_4$ cannot be written as a general product $HK$ with inverse-closed sets $H$ and $K$ of size $2$.

We now conclude this section by giving the classification of all strongly regular Cayley graphs with least eigenvalue at least $-2$ (which follows from the above). Recall that the only strongly regular graph with least eigenvalue larger than $-2$ is the $5$-cycle.
\begin{thm}
A graph $\Gamma$ is a strongly regular Cayley graph with least eigenvalue at least $-2$ if and only if $\Gamma$ is isomorphic to one of the following graphs.
\begin{itemize}
\item The cycle $C_{5}$, the Clebsch graph, the Shrikhande graph, or the Schl\"{a}fli graph,
\item The cocktail party graph $CP(n)$, with $n\geq 2$,
\item The triangular graph $T(n)$, with $n=4$, or $n\equiv 3 \text{\em{ (mod 4)}}$ and $n$ a prime power, $n>4$,
\item The lattice graph $L_{2}(n)$, with $n\geq 2$.
\end{itemize}
\end{thm}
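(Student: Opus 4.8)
The plan is to treat this statement as an assembling corollary: it collects the graph-by-graph verdicts obtained earlier in this section together with the enumeration in Theorem~\ref{thm:drg-2}, so I would organize the argument strictly along the two directions of the equivalence and invoke the already-proved facts rather than re-deriving anything.

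For sufficiency, I would check that every graph on the list is simultaneously strongly regular, of least eigenvalue at least $-2$, and a Cayley graph. The cocktail party graphs $CP(n)$ and the lattice graphs $L_2(n)$ are Cayley graphs by the explicit constructions recorded above; the Clebsch, Shrikhande, and Schl\"afli graphs are Cayley graphs by Proposition~\ref{sporadicsrg}; and the listed triangular graphs $T(n)$ are Cayley graphs by Corollary~\ref{triangular}. Each of these occurs in the $d=2$ clause of Theorem~\ref{thm:drg-2}, so it is strongly regular with least eigenvalue exactly $-2$. The remaining graph $C_5=\Cay(\mathbb{Z}_5,\{\pm1\})$ is a Cayley graph and is strongly regular with least eigenvalue $(-1-\sqrt5)/2>-2$, which is why it appears and also why it is the unique entry whose least eigenvalue is not $-2$.

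For necessity, I would start from an arbitrary strongly regular Cayley graph $\Gamma$ with least eigenvalue at least $-2$ and split on the value of that eigenvalue. If it exceeds $-2$, then the recalled fact that $C_5$ is the only strongly regular graph with least eigenvalue larger than $-2$ forces $\Gamma\cong C_5$. If it equals $-2$, the $d=2$ part of Theorem~\ref{thm:drg-2} restricts $\Gamma$ to the possibilities listed there (cocktail party, triangular, lattice, Petersen, Clebsch, Shrikhande, Schl\"afli, and the three Chang graphs). I would then delete the non-Cayley entries using Proposition~\ref{Petersen_no} (Petersen) and Proposition~\ref{chang} (the Chang graphs), keep Clebsch, Shrikhande, and Schl\"afli via Proposition~\ref{sporadicsrg}, note that the cocktail party and lattice families are always Cayley, and invoke Corollary~\ref{triangular} to decide the triangular family.

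The one step I would handle with care --- the only place where the statement is not a literal transcription of the earlier results --- is matching the triangular family of Corollary~\ref{triangular} against the strong-regularity hypothesis. That corollary admits $n\in\{2,3,4\}$ together with the prime powers $n\equiv 3\pmod 4$, but $T(2)=K_1$ and $T(3)=K_3$ are complete graphs and so are excluded from the strongly regular class by our convention; since $n=3$ is itself $\equiv 3\pmod4$, the genuinely strongly regular survivors are exactly $n=4$ and the prime powers $n\equiv3\pmod4$ with $n>4$, as written. I would also flag the harmless coincidences $T(4)\cong CP(3)$ and $L_2(2)\cong CP(2)\cong C_4$, which explain why a few graphs sit under more than one heading without creating any redundancy or conflict in the classification.
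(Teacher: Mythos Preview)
Your proposal is correct and matches the paper's approach exactly: the paper does not spell out a proof at all but simply states that the classification ``follows from the above'' together with the recalled fact that $C_5$ is the unique strongly regular graph with least eigenvalue greater than $-2$. Your write-up is precisely the assembly the paper intends, and your added care in reconciling Corollary~\ref{triangular} with the strong-regularity convention (discarding $T(2)$ and $T(3)$, and hence requiring $n>4$ in the prime-power clause) fills in the one small bookkeeping point the paper leaves implicit.
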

% -----------------------------------------------------------
\section{Distance regular graphs with diameter three}\label{sec:diameter3}
In this section, we will determine which distance-regular graphs with least eigenvalue $-2$ and diameter three are Cayley graphs. By the classification given in Theorem \ref{thm:drg-2}, we again have to consider a few sporadic examples and an infinite family.

\subsection{The line graphs of Moore graphs}

\begin{prop} \label{Petersen}
The line graph of the Petersen graph is not a Cayley graph.
\end{prop}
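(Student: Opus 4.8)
The plan is to show that the line graph of the Petersen graph cannot be a Cayley graph by a counting argument on the order of its automorphism group, exactly in the spirit of the proof for the Chang graphs (Proposition \ref{chang}). First I would recall that the Petersen graph has $10$ vertices and $15$ edges, so its line graph $L$ has $15$ vertices. If $L$ were a Cayley graph, it would have to be vertex-transitive, and hence there would be a group $G$ of order $15$ acting regularly on the vertices of $L$, realized as a subgroup of $\Aut(L)$.

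The key step is to pin down $\Aut(L)$. Since the Petersen graph is connected, regular, and not isomorphic to $K_2$ or $K_4$ (nor to the two small exceptional graphs in Theorem \ref{line isomorphism}), Lemma \ref{lem:edgevertextransitive} and Theorem \ref{line isomorphism} tell us that $\Aut(L)$ is isomorphic to $\Aut$ of the Petersen graph itself. The automorphism group of the Petersen graph is the symmetric group $S_5$, of order $120$. Thus $|\Aut(L)| = 120$.

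Now I would invoke the fact that a vertex-transitive graph on $15$ vertices requires $15$ to divide the order of its automorphism group, since the orbit of any fixed vertex has size $15$ and equals the index of its stabilizer. But $15 = 3 \cdot 5$ does not divide $120 = 2^3 \cdot 3 \cdot 5$; indeed $120/15 = 8$, so in fact $15$ \emph{does} divide $120$, and this naive divisibility test does not by itself give a contradiction. The argument must therefore be sharper: the regular subgroup of order $15$ would have to be a cyclic group $\mathbb{Z}_{15}$ (the only group of order $15$, by Sylow's theorems), and the main point is that $S_5$ has no element of order $15$ — its element orders are divisors of $1,2,3,4,5,6$ only. Hence $S_5$ contains no subgroup isomorphic to $\mathbb{Z}_{15}$, let alone a regular one.

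The main obstacle, then, is not the divisibility count but the structural verification that $S_5$ has no cyclic subgroup of order $15$: I would argue that any element of $S_5$ has order equal to the least common multiple of its cycle lengths in a partition of $5$, and no such lcm equals $15$, so no element of order $15$ exists and $\mathbb{Z}_{15}$ cannot embed in $S_5$. Since a regular subgroup of order $15$ would have to be isomorphic to $\mathbb{Z}_{15}$, no regular subgroup exists, and by \cite[Thm.~2.2]{BW} the graph $L$ is not a Cayley graph. This completes the proof.
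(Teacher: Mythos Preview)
Your argument is correct, but it takes a different route from the paper's. You identify $\Aut(L)\cong\Aut(\text{Petersen})=S_5$ explicitly and then observe that $S_5$ has no element of order $15$, hence no copy of $\mathbb{Z}_{15}$, so no regular subgroup of order $15$ exists. The paper instead pushes the action back to the Petersen graph: the group $G$ of order $15$ (necessarily $\mathbb{Z}_{15}$) acts transitively on the \emph{edges} of the Petersen graph; since the Petersen graph is not bipartite, edge-transitivity forces vertex-transitivity (\cite[Lemma 3.2.1]{Go}); and a transitive abelian permutation group must be regular (\cite[Prop.~16.5]{Biggs}), contradicting the fact that the Petersen graph has $10$ vertices rather than $15$. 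Your approach is slightly more direct for this particular case, but it relies on the exact identification of the automorphism group as $S_5$. The paper's approach needs only that every group of the relevant order is abelian, and this is precisely what allows the same argument to be reused verbatim for the Hoffman--Singleton graph in Proposition~\ref{Hoffman}, where the groups of order $175$ are again abelian; adapting your method there would require checking that $\Aut(\text{HoSi})$ has no subgroup of order $175$, which is a less elementary fact. (A minor presentational point: the detour through the failed divisibility test in the spirit of Proposition~\ref{chang} can simply be dropped from the final write-up.)
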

\begin{proof}
Let $\G$ be the line graph of the Petersen graph, and suppose that $\G \cong \Cay(G,S)$, hence $|G|=15$ and $|S|=4$. Therefore there exists a subgroup of order $15$ of the automorphism group of $\G$ which acts transitively on the edges of the Petersen graph. By Sylow's theorems, it is easy to see that the only group of order $15$ is the cyclic group $\mathbb{Z}_{15}$. This abelian group $G$ acts transitively on the edges of the Petersen graph, and because this graph is not bipartite, it follows that $G$ acts transitively on the vertices of the Petersen graph (cf.~\cite[Lemma 3.2.1]{Go}). But every transitive abelian group acts regularly (cf.~\cite[Prop.~16.5]{Biggs}), which gives a contradiction because the Petersen graph does not have 15 vertices.    \qed
\end{proof}
\begin{prop} \label{Hoffman}
The line graph of the Hoffman-Singleton graph is not a Cayley graph.
\end{prop}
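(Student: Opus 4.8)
The plan is to mimic the argument used for the line graph of the Petersen graph in Proposition \ref{Petersen}, for which the only real obstruction turns out to be a divisibility condition. First I would record the relevant combinatorial data. The Hoffman--Singleton graph is the unique Moore graph of valency $7$ (it appears as such in Theorem \ref{thm:drg-2}), so it has $50$ vertices, is $7$-regular, and has girth $5$; consequently it has $\tfrac12\cdot 50\cdot 7=175$ edges, and its line graph $\G$ has $175$ vertices (and is $12$-regular). Suppose, for contradiction, that $\G\cong\Cay(G,S)$. Then $|G|=175=5^2\cdot 7$, and since the Hoffman--Singleton graph is not isomorphic to any of the exceptional graphs listed in Theorem \ref{line isomorphism}, the natural isomorphism gives $\Aut(\G)\cong\Aut(\G')$, where $\G'$ denotes the Hoffman--Singleton graph. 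Hence $G$ may be regarded as a subgroup of $\Aut(\G')$, and the regular action of $G$ on the vertices of $\G$ translates into a regular --- in particular transitive --- action of $G$ on the $175$ edges of $\G'$.

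The decisive step is then to pass from edge-transitivity to vertex-transitivity. Because $\G'$ is connected and has odd girth $5$, it is not bipartite, so any edge-transitive group of automorphisms must in fact act transitively on its vertices (cf.\ \cite[Lemma 3.2.1]{Go}, exactly as in the proof of Proposition \ref{Petersen}). Thus $G$ acts transitively on the $50$ vertices of $\G'$, and by the orbit-stabilizer theorem $50$ must divide $|G|=175$. Since $175/50$ is not an integer, this is the desired contradiction, and therefore $\G$ is not a Cayley graph.

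I do not expect a genuine obstacle here: the argument is arithmetic rather than structural, and unlike the treatment of the Tutte--Coxeter graph no detailed Sylow analysis or generalized-polygon bookkeeping is required. The only points that need care are the two ingredients feeding the final divisibility --- the identification $\Aut(\G)\cong\Aut(\G')$, which is immediate from Theorem \ref{line isomorphism}, and the standard fact that a connected non-bipartite graph admitting an edge-transitive automorphism group is vertex-transitive. Once these are in place, the contradiction $50\nmid 175$ closes the proof with no case analysis. (If one prefers to follow the Petersen argument even more literally, one can note via Sylow's theorems that every group of order $175$ is abelian, so a transitive action is automatically regular, forcing the absurd equality $|G|=50$; but the bare divisibility $50\nmid 175$ already suffices.)
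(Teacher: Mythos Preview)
Your argument is correct and follows essentially the same route as the paper: reduce to an edge-transitive action of a group of order $175$ on the Hoffman--Singleton graph, use non-bipartiteness to get vertex-transitivity, and obtain a numerical contradiction. The paper actually takes the variant you mention parenthetically---classifying groups of order $175$ as abelian via Sylow and invoking that transitive abelian groups act regularly---whereas your main line uses the cleaner divisibility $50\nmid 175$; both finish the proof equally well.
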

\begin{proof}
Let $\G$ be the line graph of the Hoffman-Singleton graph, and suppose that $\G \cong \Cay(G,S)$, hence $|G|=175$ and $|S|=12$. It is easy to see that there exist only two groups of order $175$ by Sylow's theorems, which are the abelian groups $\mathbb{Z}_{175}$ and $\mathbb{Z}_{35} \times \mathbb{Z}_{5}$. The result now follows similarly as in Proposition \ref{Petersen} \qed
\end{proof}

The final case in this section is the line graph of a putative Moore graph on $3250$ vertices.

\begin{prop} \label{unknown}
The line graph of a strongly regular graph with parameters $(3250,57,0,1)$ is not a Cayley graph.
\end{prop}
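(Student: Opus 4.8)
The plan is to follow the strategy of Propositions~\ref{Petersen} and~\ref{Hoffman}, but to replace their concluding step by a divisibility argument, since the groups that arise here need no longer be abelian. Write $\Delta$ for the putative strongly regular graph with parameters $(3250,57,0,1)$; it is a Moore graph of degree $57$ and girth $5$ on $v=3250$ vertices (here $\lambda=0$ rules out triangles and $\mu=1$ rules out $4$-cycles, forcing girth exactly $5$). Its line graph $\G=L(\Delta)$ has $vk/2=3250\cdot 57/2=92625$ vertices and is $2(57-1)=112$-regular. Assuming $\Delta$ exists (otherwise there is nothing to prove), I would suppose for contradiction that $\G\cong\Cay(G,S)$, so that $|G|=92625$ and $|S|=112$.

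First I would transfer the regular action of $G$ on $\G$ to an action on $\Delta$. Since $\Delta$ is connected, regular, and none of the exceptional small graphs, Theorem~\ref{line isomorphism} identifies $\Aut(\G)$ with $\Aut(\Delta)$ via the natural isomorphism $\varphi$; under $\varphi^{-1}$ the regular subgroup $G\le\Aut(\G)$ (acting on $V(\G)=E(\Delta)$) corresponds to a subgroup of $\Aut(\Delta)$ acting regularly, and in particular transitively, on the edge set $E(\Delta)$. Hence $\Delta$ is edge-transitive (cf.~Lemma~\ref{lem:edgevertextransitive}).

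Next I would upgrade edge-transitivity to vertex-transitivity of $G$ itself. Because $\Delta$ has girth $5$ it contains a $5$-cycle and so is not bipartite; as in the proof of Proposition~\ref{Petersen} (cf.~\cite[Lemma~3.2.1]{Go}), the two vertex-orbits of an edge-transitive group can fail to coincide only when the graph is bipartite, so here $G$ must already act transitively on $V(\Delta)$. The contradiction then comes from orbit-stabilizer: transitivity of $G$ on $V(\Delta)$ forces $v=3250$ to divide $|G|=92625$.

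But $92625=3\cdot 5^{3}\cdot 13\cdot 19$ is odd whereas $3250=2\cdot 5^{3}\cdot 13$ is even, so $3250\nmid 92625$, the desired contradiction. The one place where this case genuinely departs from the previous two is that here not every group of order $92625$ is abelian (for instance $\mathbb{Z}_{5}^{3}\times\mathbb{Z}_{13}\times(\mathbb{Z}_{19}\rtimes\mathbb{Z}_{3})$ is nonabelian of this order), so the implication ``transitive abelian $\Rightarrow$ regular'' used in Propositions~\ref{Petersen} and~\ref{Hoffman} is unavailable. The parity obstruction $2\nmid|G|$ sidesteps this entirely, and recognizing that it alone finishes the argument is what I expect to be the crux of the write-up.
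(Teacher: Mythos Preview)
Your argument is correct, and it differs from the paper's in a meaningful way. Both proofs start identically: a putative Cayley structure on $L(\Delta)$ yields, via Theorem~\ref{line isomorphism}, a subgroup $G\le\Aut(\Delta)$ of order $92625$ acting regularly on edges, and since $\Delta$ has girth $5$ and is not bipartite, \cite[Lemma~3.2.1]{Go} forces $G$ to be transitive on $V(\Delta)$. At this point the paper invokes the nontrivial external fact that a strongly regular graph with parameters $(3250,57,0,1)$ is \emph{not} vertex-transitive \cite[Prop.~11.2]{BH} and stops. You instead finish with pure arithmetic: orbit--stabilizer would require $3250\mid 92625$, which fails on parity. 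Your route is more self-contained, since it avoids appealing to the rather deep result on the non-vertex-transitivity of the putative Moore graph; conversely, the paper's route would still work in hypothetical situations where the edge count happens to be a multiple of the vertex count, so it is in that sense more robust. Your observation that the ``transitive abelian $\Rightarrow$ regular'' step from Propositions~\ref{Petersen} and~\ref{Hoffman} is unavailable here (there are nonabelian groups of order $92625$) but also unnecessary is exactly right.
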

\begin{proof}
Let $\Gamma$ be a strongly regular graph with parameters $(3250,57,0,1)$ and suppose that the line graph of $\Gamma$ is a Cayley graph. Then $L(\Gamma)$ is vertex-transitive and therefore $\Gamma$ is edge-transitive by Lemma \ref{lem:edgevertextransitive}. On the other hand, it is known that $\Gamma$ is not vertex-transitive, see \cite[Prop.~11.2]{BH}, and therefore $\Gamma$ must be bipartite by \cite[Lemma 3.2.1]{Go}, which is a contradiction. \qed
\end{proof}

\subsection{The line graphs of the incidence graphs of projective planes}\label{sec:projplane}

Recall that a projective plane of order $q$ is a point-line incidence structure such that each line has $q+1$ points, each point is on $q+1$ lines, and every pair of points in on a unique line. It is the same as a generalized $3$-gon of order $(q,q)$ and a $2$-$(q^2+q+1,q+1,1)$ design. Currently, projective planes of order $q$ are only known to exist for prime powers $q$, and for $q=1$. For $q>1$, the classical construction of a projective plane of order $q$ uses the finite field GF($q$) and gives the so-called Desarguesian plane of order $q$. We note that Loz, Ma\v{c}aj, Miller, \v{S}iagiov\'{a}, \v{S}ir\'{a}\v{n}, and Tomanov\'{a} \cite{LMM} showed that the (distance-regular) incidence graph of a Desarguesian plane is a Cayley graph. Here we will consider the line graph, however. For $q=1$, the line graph of the incidence graph is a $6$-cycle, which is a Cayley graph. We therefore assume from now on that $q>1$. We note that the dual incidence structure of a projective plane is also a projective plane; if a projective plane is isomorphic to its dual, then we say it is self-dual.

Consider now a projective plane $\pi$ of order $q$, and let $\G_{\pi}$ be the incidence graph of $\pi$. Recall from Theorem \ref{line isomorphism} that the automorphism group of $\G_{\pi}$ and its line graph $L(\G_{\pi})$ are isomorphic. A collineation (automorphism) of $\pi$ is a permutation of the points and lines that maps points to points, lines to lines, and that preserves incidence. If $\pi$ is not self-dual, then an automorphism of the incidence graph $\G_{\pi}$ must be a collineation.  Additionally, if the projective plane is self-dual, then the automorphism group of $\G_{\pi}$ has index $2$ over the automorphism group of $\pi$; in this case the plane has so-called correlations (isomorphisms between the plane and its dual; see also \cite{M}) on top of collineations.

By construction, a vertex in $L(\G_{\pi})$ corresponds to an incident point-line pair --- also called flag -- of $\pi$. If $L(\G_{\pi})$ is a Cayley graph (or more generally, is vertex-transitive), then we have a group of collineations and correlations of $\pi$ that is transitive on flags. In particular, we have the following lemma.

\begin{lem} Let $\pi$ be a projective plane of order $q$, with $q$ even. If $L(\G_{\pi})$ is a Cayley graph, then $\pi$ has a collineation group acting regularly on its flags.
\end{lem}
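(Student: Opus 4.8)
The plan is to pull the regular subgroup furnished by the Cayley property back from $L(\G_\pi)$ to $\G_\pi$, and then to use a parity argument to rule out correlations. First I would observe that since $L(\G_\pi)$ is a Cayley graph it is vertex-transitive, so $\Aut(L(\G_\pi))$ contains a subgroup $R$ acting regularly on the vertices of $L(\G_\pi)$, which are precisely the flags of $\pi$. Because $q>1$, the incidence graph $\G_\pi$ is none of the exceptional graphs excluded in Theorem \ref{line isomorphism}, so the natural isomorphism $\varphi:\Aut(\G_\pi)\to\Aut(L(\G_\pi))$ applies. Setting $G=\varphi^{-1}(R)$, this $G\leq\Aut(\G_\pi)$ acts regularly on the edges of $\G_\pi$, that is, on the flags of $\pi$; in particular $|G|$ equals the number of flags.

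Next I would compute that order. A projective plane of order $q$ has $q^2+q+1$ points, each incident with $q+1$ lines, so the number of flags, and hence $|G|$, is $(q^2+q+1)(q+1)$. The crucial point is that when $q$ is even both $q+1$ and $q^2+q+1=q(q+1)+1$ are odd, so $|G|$ is odd.

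Finally I would discard the correlations. Recall from the discussion preceding the lemma that the collineation group of $\pi$ is exactly the subgroup of $\Aut(\G_\pi)$ preserving the bipartition of $\G_\pi$ into points and lines, and that it has index $1$ or $2$ in $\Aut(\G_\pi)$ (index $2$ precisely when $\pi$ is self-dual, the nontrivial coset consisting of correlations). Being of index at most $2$, the collineation group is normal, so there is a homomorphism $\Aut(\G_\pi)\to\mathbb{Z}_2$ with kernel the collineation group. Restricting it to $G$ yields a homomorphism from a group of odd order into $\mathbb{Z}_2$, which is necessarily trivial. Hence $G$ lies entirely in the collineation group, and $G$ is the desired collineation group acting regularly on the flags of $\pi$.

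The argument is short, and the only step needing care is the transfer: one must confirm that a regular action of $R$ on the vertices of $L(\G_\pi)$ really corresponds under $\varphi^{-1}$ to a regular action of $G$ on the flags. This is immediate from the explicit description of $\varphi$ in Theorem \ref{line isomorphism}, since $\widetilde{\sigma}$ permutes the vertices of $L(\G_\pi)$ exactly as $\sigma$ permutes the edges of $\G_\pi$, so regularity is preserved. I anticipate no real obstacle here; the hypothesis that $q$ is even does all the work, by forcing $|G|$ to be odd and thereby collapsing the collineation/correlation dichotomy.
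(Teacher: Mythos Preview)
Your proof is correct and follows essentially the same route as the paper: obtain a group $G$ of automorphisms of $\G_\pi$ acting regularly on flags (of odd order $(q+1)(q^2+q+1)$ since $q$ is even), and then observe that such a $G$ cannot contain any correlation because the collineations would then form an index-$2$ subgroup. Your write-up is a bit more explicit about the transfer via Theorem~\ref{line isomorphism} and phrases the final step as a homomorphism to $\mathbb{Z}_2$, but the substance is identical to the paper's argument.
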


\begin{proof} If $L(\G_{\pi})$ is a Cayley graph, then there must be a group $G$ of automorphisms of $\G_{\pi}$ acting regularly on the edges of $\G_{\pi}$. The group $G$ therefore has order $(q+1)(q^2+q+1)$. Moreover, $G$ is (isomorphic to) a group of collineations and correlations of $\pi$ that acts regularly on its flags. If this group contains correlations, then it has an index 2 subgroup of collineations, but this is impossible because the order of $G$ is odd. Hence $\pi$ has a collineation group acting regularly on its flags. \qed
\end{proof}

For $q$ even, we can therefore use the following characterization by Kantor \cite{K}.

\begin{thm} \cite[Thm.~A]{K} \label{Kantor}
Let $q \geq 2$, let $\pi$ be a projective plane of order $q$, and let $F$ be a collineation group of $\pi$ that is transitive on flags. Then either
\begin{itemize}
\item $PSL(3,q)$ is contained in $F$ and $\pi$ is Desarguesian, or
\item $F$ is a  Frobenius group of odd order $(q+1)(q^2+q+1)$, and $q^2+q+1$ is prime.
\end{itemize}
\end{thm}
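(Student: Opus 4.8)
The plan is to read the conclusion as a statement about flag-transitive collineation groups and to reduce it, via two classical facts, to the classification of primitive permutation groups of odd degree, which is where the classification of finite simple groups (CFSG) enters. First I would record the elementary arithmetic: a projective plane of order $q$ has $n=q^2+q+1$ points (and equally many lines) and $(q+1)(q^2+q+1)$ flags, and $n=q(q+1)+1$ is \emph{always odd}. Since $F$ is transitive on flags it is transitive on points and on lines, and $|F|$ is divisible by $(q+1)(q^2+q+1)$; moreover a point stabilizer $F_x$ is transitive on the $q+1$ lines through $x$. The first genuine reduction is to invoke the theorem of Higman and McLaughlin that a flag-transitive collineation group of a finite projective plane (indeed of any finite linear space with $\lambda=1$) acts \emph{primitively} on points. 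Thus $F$ is a primitive permutation group of the odd degree $n=q^2+q+1$.

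Next I would apply the O'Nan--Scott theorem together with the CFSG-based classification of primitive permutation groups of odd degree to the point action of $F$. Because $n$ is odd, the socle cannot be of a type forcing even degree, and after discarding the product-action, diagonal, and twisted-wreath types using the arithmetic of $n=q^2+q+1$ and the requirement that $F$ be flag-transitive, one is left with two families: the \emph{affine} type, where $F$ has a regular elementary abelian normal subgroup $\mathbb{Z}_p^a$ with $p^a=n$ and $F\leq A\Gamma L(a,p)$, and the \emph{almost simple} type, where $\mathrm{soc}(F)=X$ is a nonabelian simple group acting primitively of degree $n$.

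In the almost simple case the goal is to show $X=PSL(3,q)$ and $\pi$ Desarguesian. Here I would match the degree $n=q^2+q+1=(q^3-1)/(q-1)$ against the possible primitive degrees of simple groups from the CFSG list; the natural candidate is $PSL(3,q)$ acting $2$-transitively on the points of $PG(2,q)$, with a maximal parabolic as point stabilizer and a Borel subgroup as flag stabilizer. The subgroup and permutation-character data coming from flag-transitivity should force this identification, and then the fact that $F$ contains $PSL(3,q)$ acting on the points of the plane pins the incidence structure down to $PG(2,q)$, hence Desarguesian. In the affine case I would argue that $n=q^2+q+1=p^a$ together with a complement of order a multiple of $q+1$ acting fixed-point-freely forces $a=1$, so that $n$ is prime, the regular normal subgroup is a cyclic Singer group, and $F$ is a Frobenius group of order exactly $(q+1)(q^2+q+1)$ acting sharply flag-transitively; note that this order is odd precisely when $q$ is even, so the Frobenius alternative is intrinsically a ``$q$ even'' phenomenon, while odd $q$ is channeled into the Desarguesian conclusion.

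The main obstacle is clearly the almost simple case: excluding every nonabelian simple group other than $PSL(3,q)$ from acting primitively of degree $q^2+q+1$ with a flag-transitive extension requires a long case analysis across the CFSG list (alternating groups on $k$-subsets, classical groups on subspaces, and the exceptional and sporadic groups), checking in each instance that either the degree $q^2+q+1$ is never attained or that the induced geometry is not a projective plane. A secondary difficulty is the number-theoretic step in the affine case, showing that $q^2+q+1$ being a prime power forces it to be prime under the flag-transitivity constraints. I would not expect to shorten either of these substantially; the value of the statement is precisely that this heavy machinery has been carried out once and for all.
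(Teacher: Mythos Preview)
The paper does not give a proof of this theorem at all: it is quoted verbatim as \cite[Thm.~A]{K} and used as a black box to feed into Proposition~\ref{incidence} and Theorem~\ref{cayley3}. So there is no ``paper's own proof'' to compare your proposal against. Your sketch is, in broad strokes, a fair outline of how Kantor's original argument actually proceeds (Higman--McLaughlin to get point-primitivity, then the CFSG-based analysis of primitive groups of odd degree $q^2+q+1$, splitting into affine and almost simple socle types), so as an account of the \emph{external} proof it is reasonable; but for the purposes of this paper the correct ``proof'' is simply the citation.
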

Recall that $PSL(3,q)$ is the projective special linear group, which has order
\begin{equation*}
\frac{q^3(q^{3}-1)(q^{2}-1)}{\gcd(3,q-1)}.
\end{equation*}
If $L(\G_{\pi})$ is a Cayley graph $\Cay(G,S)$, then $|G|=(q^2+q+1)(q+1)$, and the action of $G$ on the flags of $\pi$ must be regular. Because the order of $PSL(3,q)$ is larger than $|G|$, it follows that $G$ is a Frobenius group of odd order $(q^2+q+1)(q+1)$, and that $q^2+q+1$ is prime. Recall that
a Frobenius group is a group $F$ which has a non-trivial subgroup $H$ such that $H \cap x^{-1}Hx = \{e\}$ for all $x \in F \setminus H$. Furthermore, $N= F \setminus \bigcup_{x \in F}(x^{-1}Hx\setminus \{e\})$ is a normal subgroup of $F$ such that $F=HN$ and  $H \cap K = \{e\}$, i.e. $F$ is the semidirect product $N \rtimes H$ (see \cite{R}).
\begin{prop} \label{incidence}
If the line graph of the incidence graph of a projective plane $\pi$ of order $q$ is a Cayley graph $\Cay(G,S)$, where $G$ corresponds to a group of collineations of $\pi$, then $G$ is $N \rtimes H$ in which $N$ is a normal subgroup of prime order $q^2+q+1$ and $H$ is a subgroup of odd order $q+1$.
\end{prop}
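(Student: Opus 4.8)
The plan is to build directly on the facts already assembled just before the statement. Since $G$ corresponds to a flag-transitive (indeed flag-regular) collineation group of $\pi$ and $|G|=(q^2+q+1)(q+1)$ is strictly smaller than $|PSL(3,q)|$, Kantor's Theorem \ref{Kantor} forces $G$ to be a Frobenius group of odd order $(q^2+q+1)(q+1)$ with $p:=q^2+q+1$ prime. The odd order immediately gives that $q+1$ is odd (equivalently that $q$ is even), so the only thing left to do is to locate inside $G$ a normal subgroup of order $p$ together with a complement of order $q+1$. The genuinely hard input, namely Kantor's classification, is thus already spent, and what remains is elementary group theory.

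First I would record the arithmetic fact $\gcd(p,q+1)=1$: since $p=q^2+q+1=q(q+1)+1$, we have $p\equiv 1 \pmod{q+1}$, so $p$ and $q+1$ share no common prime factor; moreover $p-(q+1)=q^2>0$, so $q+1<p$. Next I would apply Sylow's theorems for the prime $p$. The number $n_p$ of Sylow $p$-subgroups divides $q+1$ and satisfies $n_p\equiv 1\pmod p$; but every divisor of $q+1$ is at most $q+1<p$, so the congruence leaves only $n_p=1$. Hence $G$ has a unique, and therefore normal, Sylow $p$-subgroup $N$, of prime order $p=q^2+q+1$.

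It then remains to produce the complement. Because $|N|=q^2+q+1$ and $[G:N]=q+1$ are coprime, Theorem \ref{Hall} guarantees a subgroup $H\le G$ of order $q+1$. Coprimality of the orders gives $N\cap H=\{e\}$, and counting then yields $|NH|=|N|\,|H|=|G|$, so $G=NH$; since $N$ is normal this is precisely the semidirect product $G=N\rtimes H$. Finally $|H|=q+1$ is odd because $|G|$ is odd, which completes the claim.

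The step I expect to carry essentially all the weight is the one already performed in the preamble (invoking Kantor's theorem to obtain both the primality of $q^2+q+1$ and the Frobenius/odd-order structure of $G$). Within the argument above, the only point requiring genuine care is confirming that it is the factor of order $q^2+q+1$, and not the factor $q+1$, that is normal; the Sylow count settles this cleanly. One may also cross-check consistency with the Frobenius structure by noting that a Frobenius complement of order $q^2+q+1$ would have to divide $|N|-1=q$, which is impossible, so the prime-order factor can only be the kernel.
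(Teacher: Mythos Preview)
Your proof is correct and follows essentially the same route as the paper: invoke Kantor's theorem (as set up in the preamble) to get that $G$ is a Frobenius group of odd order with $q^2+q+1$ prime, use a Sylow count to obtain the normal subgroup $N$ of order $q^2+q+1$, and then apply Theorem~\ref{Hall} to produce the complement $H$ of order $q+1$. You supply more arithmetic detail (the explicit coprimality and the bound $q+1<p$) and the nice Frobenius cross-check, but the skeleton of the argument is identical to the paper's.
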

\begin{proof}
It follows from the above that $G$ is a  Frobenius group of odd order $(q^2+q+1)(q+1)$, and $q^2+q+1$ is a prime number. It follows that $G$ has a normal $(q^2+q+1)$-Sylow subgroup $N$ of order $q^2+q+1$ by Sylow's theorems. On the other hand, there exists a subgroup $H$ of order $q+1$ in $G$ by Theorem \ref{Hall}, and the intersection of $N$ and $H$ is the identity element of $G$. Therefore $G$ is $N \rtimes H$. \qed
\end{proof}

It is widely believed that there is no non-Desarguesian plane admitting a collineation group acting transitively on flags. Thas and Zagier \cite{TZ} showed that if such a plane exists, then its order is at least $2 \times 10^{11}$.

On the other hand, Higman and McLaughlin \cite{HM} showed that the only Desarguesian planes admitting a collineation group acting regularly on flags are those of order $2$ and $8$. Indeed, the line graphs of the incidence graphs of these projective planes can be constructed as Cayley graphs as follows:
 \begin{exam}
The Heawood graph is the incidence graph of the Fano plane; its line graph is the unique graph with spectrum $\{4^1,(1+\sqrt{2})^6,(1-\sqrt{2})^6,-2^8\}$ (see \cite{VH}).
Let $G=\mathbb{Z}_{7} \rtimes \mathbb{Z}_{3}=\seq{a,b|a^{7}=b^{3}=e,b^{-1}ab=a^2}$. Let $H=\seq{b}$, $K=\seq{a^{-1}ba}$ and $S=(H \cup K) \setminus \{e\}$ (cf.~Theorem \ref{Cayley line structure}). By using {\sf GAP} \cite{GAP} and similar codes as in \cite[p. 4]{AJ2}, it is checked that the Cayley graph $\Cay(G,S)$ is indeed the line graph of the Heawood graph.

Similarly the line graph of the incidence graph of the (unique) projective plane of order $8$ is obtained by taking $G=\mathbb{Z}_{73} \rtimes \mathbb{Z}_{9}=\seq{a,b|a^{73}=b^{9}=e,b^{-1}ab=a^2}$, $H=\seq{b}$, $K=\seq{a^{-1}ba}$, and $S=(H \cup K) \setminus \{e\}$.
\end{exam}

We may thus conclude the following.

 \begin{thm}\label{cayley3}
 Let $\Gamma$ be a distance-regular Cayley graph with diameter three and least eigenvalue at least $-2$. Then $\Gamma$ is isomorphic to one of the following graphs.
 \begin{itemize}
 \item The cycle $C_6$ or $C_7$,
 \item The line graph of the incidence graph of the Desarguesian projective plane of order $2$ or $8$,
 \item The line graph of the incidence graph of a non-Desarguesian projective plane of order $q$, where $q^2+q+1$ is prime and $q$ is even and at least $2 \times 10^{11}$,
 \item The line graph of the incidence graph of a projective plane of odd order with a group of collineations and correlations acting regularly on its flags.
 \end{itemize}
 \end{thm}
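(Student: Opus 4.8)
The plan is to feed the classification in Theorem~\ref{thm:drg-2}, together with the fact recalled in the preliminaries that the only distance-regular graphs of least eigenvalue larger than $-2$ are complete graphs and odd cycles, into a case analysis, and then dispose of each case using the propositions already proved. A distance-regular graph $\Gamma$ of diameter three with least eigenvalue at least $-2$ must therefore be an even cycle, which in diameter three is $C_6$ (with least eigenvalue exactly $-2$); an odd cycle, which in diameter three is $C_7$ (with least eigenvalue larger than $-2$); or one of the four line graphs listed in the $d=3$ part of Theorem~\ref{thm:drg-2}. Since $C_6$ and $C_7$ are circulants, they are Cayley graphs, which accounts for the first bullet.

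Three of the four line-graph candidates are handled directly by earlier results: Propositions~\ref{Petersen}, \ref{Hoffman}, and~\ref{unknown} say exactly that the line graphs of the Petersen graph, of the Hoffman-Singleton graph, and of a putative $(3250,57,0,1)$ strongly regular graph are not Cayley graphs. This leaves only the line graph $L(\G_{\pi})$ of the incidence graph of a projective plane $\pi$ of order $q$, and the remainder of the proof analyses when this is a Cayley graph.

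For $q=1$ the graph $L(\G_{\pi})$ is the hexagon $C_6$, which is already listed, so I would assume $q>1$ and split on the parity of $q$. When $q$ is even, the lemma preceding Theorem~\ref{Kantor} forces any Cayley presentation to come from a collineation group $G$ acting regularly, hence transitively, on the flags of $\pi$, with $|G|=(q+1)(q^2+q+1)$ odd. Applying Kantor's Theorem~\ref{Kantor} to $G$, the alternative $PSL(3,q)\subseteq G$ is impossible because $|PSL(3,q)|$ exceeds $|G|=(q+1)(q^2+q+1)$, so $G$ must be a Frobenius group of odd order with $q^2+q+1$ prime, as recorded in Proposition~\ref{incidence}. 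To separate the Desarguesian from the non-Desarguesian case I would then quote the result of Higman and McLaughlin~\cite{HM}, which restricts flag-regular Desarguesian planes to orders $2$ and $8$ (and the Example above exhibits these two line graphs as genuine Cayley graphs, giving the second bullet), and the bound of Thas and Zagier~\cite{TZ}, which forces a non-Desarguesian $\pi$ to have order at least $2\times 10^{11}$; together with $q$ even and $q^2+q+1$ prime this is the third bullet.

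Finally, when $q$ is odd the order $(q+1)(q^2+q+1)$ is even, so the lemma no longer applies and the regular flag-group $G$ may contain correlations as well as collineations; all that survives is the assertion that $\pi$ admits a group of collineations and correlations acting regularly on its flags, which is precisely the fourth bullet. I expect the main obstacle to be conceptual rather than computational: the genuinely decisive observation is that the parity of $|G|$ is what excludes correlations and thereby lets Kantor's theorem fully resolve the even case, whereas the odd case remains open and must be stated as a conditional existence statement. The only routine checks left over are that $C_6$, $C_7$, and the order-$2$ and order-$8$ line graphs really are Cayley graphs, which the constructions given earlier already supply.
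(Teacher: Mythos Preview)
Your proposal is correct and follows essentially the same route as the paper: the theorem is presented there not with a standalone proof but as the conclusion of Section~\ref{sec:diameter3}, assembled from Theorem~\ref{thm:drg-2}, Propositions~\ref{Petersen}--\ref{unknown}, the parity lemma, Kantor's theorem, Proposition~\ref{incidence}, the Higman--McLaughlin result, and the Thas--Zagier bound, exactly as you outline. The one point you could sharpen is the odd-$q$ case: since $(q+1)(q^2+q+1)$ is then even, neither alternative in Kantor's theorem can hold for a regular collineation group, so such a $G$ \emph{must} contain correlations rather than merely ``may''---but the paper does not make this explicit either.
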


It would be interesting to find out whether any of the results on collineations of projective planes can be extended to groups of collineations and correlations, and thus rule out the final case of Theorem \ref{cayley3}. We could not find any such results in the literature.

Besides the line graph of the Tutte-Coxeter graph (see Proposition \ref{tuttecoxeter}) we leave the case of the line graphs of incidence graphs of generalized quadrangles and hexagons open (cf.~Theorem \ref{thm:drg-2}). For some results on flag-transitive generalized quadrangles, we refer to Bamberg, Giudici, Morris, Royle, and Spiga \cite{flagquad}; for flag-transitive generalized hexagons, we refer to Schneider and Van Maldeghem \cite{flaghexa}.
\begin{acknowledgements}
The authors thank Brendan McKay for pointing to \cite[Thm.~5.3]{Sa} in order to prove Lemma \ref{lem:edgevertextransitive}.
 Mojtaba Jazaeri thanks the Graduate Studies of University of Isfahan and Tilburg University since this paper was partly written during his visit at Tilburg University as part of his PhD program in Isfahan. The research of Alireza Abdollahi was in part supported by a grant from School of Mathematics, Institute for Research in Fundamental Sciences (IPM) (No. 94050219). Alireza Abdollahi is also supported financially by the Center of Excellence, University of Isfahan. The research of Mojtaba Jazaeri was in part supported by a grant from School of Mathematics, Institute for Research in Fundamental Sciences (IPM) (No. 94050039).
\end{acknowledgements}

% BibTeX users please use one of
%\bibliographystyle{spbasic}      % basic style, author-year citations
%\bibliographystyle{spmpsci}      % mathematics and physical sciences
%\bibliographystyle{spphys}       % APS-like style for physics
%\bibliography{}   % name your BibTeX data base

% Non-BibTeX users please use

\end{document}